\newtheorem{thm}{Theorem}[section]
\newtheorem{defn}[thm]{Definition}
\newtheorem{lem}[thm]{Lemma}
\newtheorem{prop}[thm]{Proposition}
\newtheorem{rmk}[thm]{Remark}
\newtheorem{theorem}[thm]{Theorem}
\newtheorem{definition}[thm]{Definition}
\newcommand{\conj}{\mathop{\mathrm{Conj}}}
\numberwithin{equation}{section}
\begin{document}

\title[Zeroes of weakly slice regular functions]{ Zeroes of weakly slice regular functions   of several quaternionic variables  on non-axially symmetric domains}
\author{Xinyuan Dou}
\email[Xinyuan Dou]{douxinyuan@ustc.edu.cn}
\address{Department of Mathematics, University of Science and Technology of China, Hefei 230026, China}
\address{Institute of Mathematics, AMSS, Chinese Academy of Sciences, Beijing 100190, China}
\author{Ming Jin\textsuperscript{\Letter}}
\email[Ming Jin]{mjin@must.edu.mo}
\address{Faculty of Innovation Engineering, Macau University of Science and Technology, Macau, China} 
\author{Guangbin Ren}
\email[Guangbin Ren]{rengb@ustc.edu.cn}
\address{Department of Mathematics, University of Science and Technology of China, Hefei 230026, China}
\author{Ting Yang}
\email[Ting Yang]{tingy@aqnu.edu.cn}
\address{School of Mathematics and Physics, Anqing Normal University, Anqing 246133, China}
\keywords{Quaternions; slice regular functions; zero set; symmetrization; conjugation}
\thanks{This work was supported by the China Postdoctoral Science Foundation (2021M703425), the NNSF of China (12171448), the Faculty Research Grants of the Macau University of Science and Technology (FRG-23-034-FIE) and Xiaomi Young Talents Program.}

\subjclass[2020]{Primary: 30G35; Secondary: 32A30}

\begin{abstract}
	In this research, we study zeroes of weakly slice regular functions within the framework of several quaternionic variables, specifically focusing on non-axially symmetric domains. Our recent work introduces path-slice stem functions, along with a novel $*$-product, tailored for weakly slice regular functions. This innovation allows us to explore new techniques for conjugating and symmetrizing path-slice functions. A key finding of our study is the discovery that the zeroes of a path-slice function are comprehensively encapsulated within the zeroes of its symmetrized counterpart. This insight is particularly significant in the context of path-slice stem functions. We establish that for weakly slice regular functions, the processes of conjugation and symmetrization gain prominence once the function's slice regularity is affirmed. Furthermore, our investigation sheds light on the intricate nature of the zeroes of a slice regular function. We ascertain that these zeroes constitute a path-slice analytic set. This conclusion is drawn from the observed phenomenon that the zeroes of the symmetrization of a slice regular function also form a path-slice analytic set. This finding marks an advancement in understanding the complex structure and properties of weakly slice regular functions in quaternionic analysis.
\end{abstract}

\maketitle

\section{Introduction}
Quaternions, conceptualized by Hamilton in 1843, represent an extension of complex numbers, made possible by the Cayley-Dickson construction as noted in Dickson's work \cite{Dickson1919001}. This development led to a unique form of quaternionic analysis, aiming to broaden the scope of holomorphic function theory into quaternionic  variable domains. One significant branch of quaternionic analysis is slice analysis over quaternions, introduced by Gentili and Struppa \cite{Gentili2007001}, which bases itself on the idea of representing the quaternionic field $ \mathbb H $ as a collective of complex planes.

The slice regular functions, central to this theory, are functions that conform to the Cauchy-Riemann equations across these complex planes. This categorizes them as vector-valued holomorphic functions when analyzed within these planes. Notably, while simple functions like identities and polynomials qualify as slice regular functions, this classification does not extend to Fueter-regular functions, another quaternionic analysis model predating slice analysis, as discussed in Fueter's work \cite{Fueter1934001}.

The growth and development of slice analysis have led to its integration into several mathematical disciplines, including geometric function theory \cite{Ren2017001,Ren2017002,Wang2017001}, quaternionic Schur analysis \cite{Alpay2012001}, and quaternionic operator theory \cite{Alpay2015001,MR3887616,MR3967697,Gantner2020001,MR4496722}. Its expansion further encompasses higher dimensions through real Clifford algebras \cite{Colombo2009002}, octonions \cite{Gentili2010001}, real alternative $*$-algebras \cite{Ghiloni2011001}, and $2n$-dimensional Euclidean spaces \cite{Dou2023002}. Additionally, the study of slice analysis extends to several variables across various frameworks \cite{Colombo2012002,Dou2023002,Ghiloni2012001,Ghiloni2020001}, presenting it as an evolution of complex analysis in several variables.

Two distinct forms of slice regular functions have emerged within this field. The first, the weakly slice regular functions, were introduced by Gentili and Struppa \cite{Gentili2007001} and initially focused on Euclidean open sets, with the representation formula playing a pivotal role in their study \cite{Colombo2009001}. The second, the strongly regular functions, were later introduced by Ghiloni and Perotti \cite{Ghiloni2011001} to extend the concept to quadratic cones in real alternative $*$-algebras. This extension brought about the concept of stem functions, intrinsic to the structure of slice functions and pivotal for holomorphy and multiplication in slice analysis.

One of recent advancements in slice analysis is the slice topology \cite{Dou2023001}, a nuanced approach that transcends the limitations of the Euclidean topology and facilitates the study of slice analysis in non-axially symmetric domains. This has led to the emergence of path-slice functions and a deeper exploration of the convergence domains of quaternionic power series.

The primary objective of this paper is to investigate the zero sets of weakly slice regular functions in several quaternionic variables, particularly within non-axially symmetric slice-domains. While earlier studies \cite{Gentili2008001,MR3026135,MR4182982} have focused on Euclidean domains, this paper aims to understand the properties of zeros on domains in slice topology. Central to this study is the concept of symmetrization of path-slice and subsequently slice regular functions. By examining the conjugation and symmetrization processes \cite{Colombo2009001}, and ensuring the preservation of slice regularity and slice-preserving properties, this paper aims to provide a thorough understanding of the zeros of these functions and their analytical nature in quaternionic variable domains.

This paper unfolds as follows:
Section 2 serves as a foundation, where we delve into the fundamentals of weak slice regular functions and their associated stem functions within the context of multiple quaternionic variables.
In Section 3, we introduce the concept of $\Omega_1$-slice conjugation for path-slice functions. We demonstrate that this conjugation maintains slice regularity when derived from a slice regular function.
Section 4 is dedicated to the exploration of symmetrization within the realm of path-slice functions. Here, we establish that this symmetrization retains the critical attribute of being slice-preserving.
Finally, Section 5 focuses on the zeros of a path-slice function. By juxtaposing these zeros with those of their symmetrization, we reveal that the zero set of a slice regular function forms an analytic set, thereby providing a deeper understanding of its structural properties.

\section{Preliminaries}

This section lays the groundwork for our analysis, building upon foundational concepts and results as detailed in \cite{Dou2023003,Dou2023004}. 

Denote by $\mathbb{H}$ the algebra of quaternions. Define
\begin{equation*}
    \mathbb{H}_s^n:=\bigcup_{I\in\mathbb{S}}\mathbb{C}_I^n
\end{equation*}
where
\begin{equation*}
    \mathbb{S}:=\{I\in\mathbb{H}:I^2=-1\},\qquad\mbox{and}\qquad \mathbb{C}_I^n := (\mathbb{C}_I)^n.
\end{equation*}

The slice topology is defined by
\begin{equation*}
    \tau_s(\mathbb{H}_s^n):=\{U\in\mathbb{H}_s^n: U_I\in\tau(\mathbb{C}_I^n)\}
\end{equation*}
where $\tau(\mathbb{C}_I^n)$ is the Euclidean topology in $n$-dimensional complex plane $\mathbb{C}_I^n$ and
\begin{equation*}
    U_I:=U\cap\mathbb{C}_I.
\end{equation*}
Open sets, connected sets, and paths in $\tau_s$ are called respectively slice-open sets, slice-connected sets, and slice-paths.

Extending the classical complex space    \(\mathbb{C}^n\) 
with quaternionic imaginary units
  \(I\) from \(\mathbb{S}\) leads to the space   \(\mathbb{C}_I^n\). 
  The transition employs the mapping 
   \(\Psi_i^I: \mathbb{C}^n \xlongrightarrow{} \mathbb{C}_I^n\), defined by 
\[\Psi_i^I(x + yi) = x + yI,\]
 for   real vectors \(x, y \in \mathbb{R}^n\), replacing the standard imaginary unit
  \(i\) with   \(I\).

  To understand the structure of $\mathbb H_s^n$ and the behavior of related functions, we define a set of paths
   \(\mathscr{P}(\mathbb{C}^n)\)  as continuous paths $\gamma$ from   $[0, 1]$  to  \(\mathbb{C}^n\), 
   starting in the real subspace
  \(\mathbb{R}^n\) at $\gamma(0)$.
For a subset \(\Omega\) of   \(\mathbb{H}_s^n\), we define  \(\mathscr{P}(\mathbb{C}^n, \Omega)\) as   the paths 
$\delta$ in  \(\mathscr{P}(\mathbb{C}^n)\) for which  there exists $I\in\mathbb S$
such that the path in $\mathbb H_s^n$
 $$\delta^I := \Psi_i^I(\delta) $$
 lies in  $\Omega$.

Furthermore, for a fixed path \(\gamma\) in \(\mathscr{P}(\mathbb{C}^n)\), we define the set \(\mathbb{S}(\Omega, \gamma)\) as
\[\mathbb{S}(\Omega, \gamma) := \left\{I \in \mathbb{S} \mid \gamma^I \subset \Omega\right\}.\]
This set contains all quaternionic imaginary units \(I\) from \(\mathbb{S}\) for which the path \(\gamma\), when transformed to the slice \(\mathbb{C}_I^n\) of \(\mathbb{H}_s^n\), is entirely contained within the subset \(\Omega\).

 Path-slice functions and their stem functions are central to our study.

\begin{definition}\label{def-path-slice}
	Let \(\Omega \subset \mathbb{H}_s^n\). A function \(f: \Omega \rightarrow \mathbb{H}\) is termed \textit{\textbf{path-slice}} if there exists a function 
	$$F: \mathscr{P}(\mathbb{C}^n, \Omega) \rightarrow \mathbb{H}^{2 \times 1}$$  satisfying
	\begin{equation}\label{eq-fcg}
		f \circ \gamma^{I}(1) = (1, I)F(\gamma),
	\end{equation}
	for any \(\gamma \in \mathscr{P}(\mathbb{C}^n, \Omega)\) and \(I \in \mathbb{S}(\Omega, \gamma)\).

	 The function \(F\) is referred to as a \textit{\textbf{path-slice stem function}} of \(f\). The set of all path-slice functions defined on \(\Omega\) is denoted by \(\mathcal{PS}(\Omega)\), and the set of all path-slice stem functions of a function \(f \in \mathcal{PS}(\Omega)\) is denoted by \(\mathcal{PSS}(f)\).
\end{definition}

Exploring the complex structures of slice quaternionic analysis leads us to the concept of slice regularity.

\begin{definition}
	For a subset \(\Omega\) in the slice topology \(\tau_s(\mathbb{H}_s^n)\), a function \(f: \Omega \rightarrow \mathbb{H}\) is called \textit{\textbf{(weakly) slice regular}} if and only if, for each \(I \in \mathbb{S}\), the restriction $$f_I := f|_{\Omega_I}$$  is (left \(I\)-)holomorphic. This means that \(f_I\) is real differentiable and, for each \(\ell = 1, 2, \ldots, d\),
	\begin{equation*}
		\frac{1}{2} \left( \frac{\partial}{\partial x_\ell} + I \frac{\partial}{\partial y_\ell} \right) f_I(x + yI) = 0 \quad \text{on} \quad \Omega_I.
	\end{equation*}
	The set of all weakly slice regular functions defined on \(\Omega\) is denoted by \(\mathcal{SR}(\Omega)\).
\end{definition}

By \cite[Corollary 5.9]{Dou2023002}, weakly slice regular functions are path-slice.

In our exploration of specific subsets within the quaternionic cone $\mathbb{H}_s^n$,
  it becomes necessary to introduce and define certain key concepts.
 
\begin{definition}\label{def-osmh}
	A subset \(\Omega \subset \mathbb{H}_s^n\) is called \textit{\textbf{real-path-connected}} if, for each point \(q \in \Omega\), there exists a path \(\gamma \in \mathscr{P}(\mathbb{C}^n, \Omega_1)\) and an imaginary unit \(I \in \mathbb{S}(\Omega, \gamma)\) such that \(\gamma^I(1) = q\).
\end{definition}

We introduce the set $\mathscr{P}_*^2(\mathbb{C}^n, \Omega)$,  which  comprises pairs of continuous paths
$(\alpha, \beta)$
within $\mathscr{P}(\mathbb{C}^n, \Omega)$
and sharing the same endpoint at $\alpha(1) = \beta(1)$.  Formally, it is represented as
\begin{equation}\label{eq-mp*2}
	\mathscr{P}_*^2(\mathbb{C}^n, \Omega) := \left\{ (\alpha, \beta) \in \left[ \mathscr{P}(\mathbb{C}^n, \Omega) \right]^2 : \alpha(1) = \beta(1) \right\}.
\end{equation}
 
\begin{definition}\label{defn-lo1o2}
	Let \(\Omega_1, \Omega_2 \subset \mathbb{H}_s^n\). The set \(\Omega_2\) is termed \textit{\textbf{\(\Omega_1\)-stem-preserving}} if it satisfies the following conditions:
	\begin{enumerate}[\upshape (i)]
		\item The cardinality of the set \(\mathbb{S}(\Omega_2, \gamma)\) is at least 2 for each path \(\gamma \in \mathscr{P}(\mathbb{C}^n, \Omega_1)\).
		\item The intersection \(\mathbb{S}(\Omega_2, \alpha) \cap \mathbb{S}(\Omega_2, \beta)\) does not contain exactly one element for each pair \((\alpha, \beta)\) in \(\mathscr{P}_*^2(\mathbb{C}^n, \Omega_1)\).
	\end{enumerate}
\end{definition}
 
\begin{defn}
	Let $\Omega_1\subset\mathbb{H}_s^n$ be real-path-connected, $\Omega_2\subset\mathbb{H}_s^n$ be $\Omega_1$-stem-preserving, and $f:\Omega_2\rightarrow\mathbb{H}$ be path-slice. Then
	\begin{equation}\label{eq-F def}
		\begin{split}
			F_{\Omega_1}^f:=\begin{pmatrix}
				F_{\Omega_1}^{f,1}\\F_{\Omega_1}^{f,2}
			\end{pmatrix}:\quad\mathscr{P}(\mathbb{C}^n,\Omega_1)\quad &\xlongrightarrow[\hskip1cm]{}\quad \mathbb{H}^{2\times 1}
			\\ \gamma\qquad\ &\shortmid\!\xlongrightarrow[\hskip1cm]{}\ G|_{\mathscr{P}(\mathbb{C}^n,\Omega_1)},
		\end{split}
	\end{equation}
	is well defined and does not depend on the choice of $G\in\mathcal{PSS}(f)$.
\end{defn}

This function $	F_{\Omega_1}^f $ is well-defined and its formulation is independent of the particular choice of the stem function 
$F$ within the set 
\(\mathcal{PSS}(f)\), as established in    \cite[Proposition 3.9]{Dou2023003}.

\begin{lem} \cite[Proposition 3.3]{Dou2023003}.
	Let $\Omega\subset\mathbb{H}_s^n$, $f\in\mathcal{PS}(\Omega)$, $\gamma\in\mathscr{P}(\mathbb{C}^n,\Omega)$, $F$ be a path-slice stem function of $f$, and $I,J\in\mathbb{S}(\Omega,\gamma)$ with $I\neq J$. Then
	\begin{equation}\label{eq-fgbp}
		F(\gamma)=\begin{pmatrix}
			1&I\\1& J
		\end{pmatrix}^{-1}\begin{pmatrix}
			f\circ\gamma^I(1)\\f\circ\gamma^J(1)
		\end{pmatrix}.
	\end{equation}
\end{lem}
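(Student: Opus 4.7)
The plan is to compute $F(\gamma)$ directly from the defining identity \eqref{eq-fcg} by evaluating it at the two given imaginary units. By Definition \ref{def-path-slice}, for every $K \in \mathbb{S}(\Omega, \gamma)$ one has
\[
f \circ \gamma^K(1) \;=\; (1, K)\, F(\gamma) \;=\; F^1(\gamma) + K\, F^2(\gamma),
\]
where $F^1$ and $F^2$ denote the two components of $F$. Applying this once with $K = I$ and once with $K = J$ and stacking the two resulting equalities yields the matrix identity
\[
\begin{pmatrix} 1 & I \\ 1 & J \end{pmatrix} F(\gamma) \;=\; \begin{pmatrix} f\circ\gamma^I(1) \\ f\circ\gamma^J(1) \end{pmatrix}.
\]

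The remaining task is to verify that the coefficient matrix on the left is invertible over $\mathbb{H}$ whenever $I \neq J$, and to identify its inverse. Since the quaternionic entries do not commute, I would avoid a classical determinant/adjugate argument and instead solve the $2 \times 2$ system by an elementary row operation. Subtracting the first equation from the second eliminates $F^1(\gamma)$ and gives $(J - I)\, F^2(\gamma) = f\circ\gamma^J(1) - f\circ\gamma^I(1)$. Because $I, J \in \mathbb{S}$ are distinct, the quaternion $J - I$ is nonzero and hence invertible in $\mathbb{H}$; this uniquely determines $F^2(\gamma)$, and back-substitution into the first equation then determines $F^1(\gamma)$. Reassembling the two components recovers exactly the formula \eqref{eq-fgbp}.

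No step here is a genuine obstacle: the argument is essentially a two-line calculation once \eqref{eq-fcg} is written out for $K = I$ and $K = J$. The only point requiring a little care is the non-commutative matrix inversion, where one must respect the side on which multiplication occurs (the matrix entries act on the left of the components of $F(\gamma)$) and read off the inverse from the row reduction just described rather than from a commutative-style cofactor formula.
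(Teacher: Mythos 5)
Your argument is correct. The paper itself gives no proof of this lemma --- it is quoted verbatim from \cite[Proposition 3.3]{Dou2023003} --- but what you write is the natural (and surely the intended) derivation: evaluate the defining identity \eqref{eq-fcg} at the two units $I$ and $J$, stack the resulting equations, and invert the coefficient matrix. Your handling of the non-commutativity is the right one: row reduction over the division ring $\mathbb{H}$ gives $(J-I)F^2(\gamma)=f\circ\gamma^J(1)-f\circ\gamma^I(1)$, and $J-I\neq 0$ is invertible, so back-substitution determines $F(\gamma)$ uniquely; since a square matrix over a division ring with a left inverse is two-sided invertible, the notation in \eqref{eq-fgbp} is justified. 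The only cosmetic remark is that one can also exhibit the inverse explicitly (its second row is $\bigl((I-J)^{-1},\,-(I-J)^{-1}\bigr)$), but your row-reduction formulation already establishes the claim.
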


For any quaternion \(q = (q_1, \ldots, q_n)\) within the \(n\)-dimensional weakly slice cone \(\mathbb{H}_s^n\), there exists a specific imaginary unit \(I\) from the set \(\mathbb{S}\) such that each component \(q_i\) of \(q\) belongs to the corresponding complex plane \(\mathbb{C}_I\). This particular imaginary unit \(I\) is denoted as \(\mathfrak{I}(q)\). To define \(\mathfrak{I}(q)\) precisely, consider the function:
\begin{equation*}
	\begin{split}
		\mathfrak{I}: \mathbb{H}_s^n &\longrightarrow \mathbb{S} \cup \{0\}
	\end{split}
\end{equation*}
defined by 
\begin{equation*}
	\begin{split}
		\mathfrak{I}(q)=\begin{cases}
			0, & \text{if } q \text{ is entirely in } \mathbb{R}^n, \\
			\frac{q_\imath - \text{Re}(q_\imath)}{|q_\imath - \text{Re}(q_\imath)|}, & \text{otherwise},
		\end{cases}
	\end{split}
\end{equation*}
where \(\imath\) is the smallest positive integer from \(\{1, \ldots, n\}\) such that the component \(q_\imath\) of \(q\) is not a real number. This formulation assigns to each quaternion in \(\mathbb{H}_s^n\) an appropriate imaginary unit from \(\mathbb{S}\) or the value \(0\) if the quaternion lies entirely in the real space \(\mathbb{R}^n\).

By \cite[Proposition 3.11]{Dou2023003}, the following definition is well-defined.

\begin{defn}
	Let $\Omega_1\subset\mathbb{H}_s^n$ be real-path-connected, $\Omega_2\subset\mathbb{H}_s^n$ be $\Omega_1$-stem-preserving, and $f:\Omega_2\rightarrow\mathbb{H}$ be path-slice. Then   the function 
	\begin{equation}\label{eq-mathscr f def}
		\begin{split}
			\mathscr{F}_{\Omega_1}^f :\quad\Omega_1\quad &\xlongrightarrow[\hskip1cm]{}\quad \mathbb{H}^{2\times 1} 
					\end{split}
	\end{equation}
is  defined by
	\begin{equation}\label{eq-mathscr f def}
	\begin{split}
		\mathscr{F}_{\Omega_1}^f(q)=     \begin{cases}
			F_{\Omega_1}^f(\gamma),\qquad& q\notin\mathbb{R}^n,
			\\\left(f(q),0\right)^T,\qquad&\mbox{otherwise},
		\end{cases}
	\end{split}
\end{equation}
for any  $\gamma\in\mathscr{P}(\mathbb{C}^n,\Omega_1)$ with $\gamma^{\mathfrak{I}(q)}\subset\Omega_1$ and $\gamma^{\mathfrak{I}(q)}(1)=q$.
\end{defn}

 Now we  introduce the $*$-product and its properties, a fundamental operation that underpins the algebraic structure of the quaternionic function space. This operation not only enriches the algebraic framework but also provides a tool for examining the interactions between quaternionic functions in more complex scenarios.

\begin{defn}
	Let $\Omega_1\subset\mathbb{H}_s^n$ be real-path-connected, $\Omega_2\subset\mathbb{H}_s^n$ be $\Omega_1$-stem-preserving, $f\in\mathcal{PS}(\Omega_1)$ and $g\in\mathcal{PS}(\Omega_2)$. We call
	\begin{equation}\label{eq-starproduct def}
		f*g:=(f,\mathfrak{I}f) \mathscr{F}_{\Omega_1}^{g}:\Omega_1\rightarrow\mathbb{H}
	\end{equation}
	the $*$-product of $f$ and $g$.
\end{defn}

 In the context of quaternionic analysis, the standard product in $\mathbb{H} \otimes_{\mathbb{R}} \mathbb{C}$ can be extended to a unique product within the space of $2 \times 1$ quaternionic column vectors, $\mathbb{H}^{2 \times 1}$. This specialized product is referred to as the $*$-product. To define this product more concretely, consider two vectors $p = (p_1, p_2)^T$ and $q = (q_1, q_2)^T$ in $\mathbb{H}^{2 \times 1}$. The pointwise $*$-product of these vectors is given by the formula:
 \begin{equation}\label{eq-star product pointwise}
 	p * q := \left( p_1 \mathbb{I} + p_2 \sigma \right) \left( q_1 \mathbb{I} + q_2 \sigma \right) e_1,
 \end{equation}
 where the matrices $\mathbb{I}$ and $\sigma$, and the vector $e_1$, are defined as 
 \begin{equation*}
 	\mathbb{I} := \begin{pmatrix}
 		1 \\ & 1
 	\end{pmatrix}, \quad
 	\sigma := \begin{pmatrix}
 		& -1 \\ 1
 	\end{pmatrix}, \quad
 	e_1 := \begin{pmatrix}
 		1 \\ 0
 	\end{pmatrix}.
 \end{equation*}
 
 Extending this concept to functional spaces, let $\Omega$ be a subset of $\mathbb{H}_s^n$, and consider two functions $F, G: \mathscr{P}(\mathbb{C}^n, \Omega) \rightarrow \mathbb{H}^{2 \times 1}$. The functional $*$-product of $F$ and $G$ is then defined as a mapping from the set of paths $\mathscr{P}(\mathbb{C}^n, \Omega)$ to $\mathbb{H}^{2 \times 1}$, where each path $\gamma$ is mapped to the pointwise $*$-product of $F(\gamma)$ and $G(\gamma)$:
 \begin{eqnarray}\label{eq-star product functions}
 	F * G: \mathscr{P}(\mathbb{C}^n, \Omega) &\rightarrow&  \mathbb{H}^{2 \times 1},
 \\ 	\gamma  &\mapsto&  F(\gamma) * G(\gamma).\notag
 \end{eqnarray}
  
\begin{prop}\label{pr-1} \cite[Proposition 4.5]{Dou2023003}.
	Let $\Omega_1\subset\mathbb{H}_s^n$ be real-path-connected, $\Omega_2\subset\mathbb{H}_s^n$ be $\Omega_1$-stem-preserving, $f\in\mathcal{PS}(\Omega_1)$ and $g\in\mathcal{PS}(\Omega_2)$. Then $f*g\in\mathcal{PS}(\Omega_1)$. Moreover, if $F$ is a path-slice stem function of $f$, then $F*F_{\Omega_1}^g$ is a path-slice stem function of $f*g$.
\end{prop}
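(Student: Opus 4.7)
My plan is to verify directly that $H := F * F_{\Omega_1}^g$ is itself a path-slice stem function of $f * g$; this simultaneously delivers $f * g \in \mathcal{PS}(\Omega_1)$ and the second assertion. Fix $\gamma \in \mathscr{P}(\mathbb{C}^n, \Omega_1)$ and $I \in \mathbb{S}(\Omega_1, \gamma)$, and set $q_0 := \gamma^I(1)$. Write $F(\gamma) = (p_1, p_2)^T$ and $F_{\Omega_1}^g(\gamma) = (r_1, r_2)^T$. A brief matrix computation from \eqref{eq-star product pointwise}, using $\sigma^2 = -\mathbb{I}$, gives
\[
  F(\gamma) * F_{\Omega_1}^g(\gamma) = \bigl(p_1 r_1 - p_2 r_2,\ p_1 r_2 + p_2 r_1\bigr)^T,
\]
hence
\[
  (1, I)\, H(\gamma) = (p_1 r_1 - p_2 r_2) + I(p_1 r_2 + p_2 r_1).
\]
The remaining task is to show this equals $(f * g)(q_0) = (f(q_0),\, \mathfrak{I}(q_0) f(q_0))\, \mathscr{F}_{\Omega_1}^g(q_0)$, which I would do by splitting on whether $q_0$ is real.

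If $q_0 \notin \mathbb{R}^n$, then $q_0 \in \mathbb{C}_I^n$ forces $\mathfrak{I}(q_0) = I$, and $\gamma$ itself is an admissible representative path, so $\mathscr{F}_{\Omega_1}^g(q_0) = F_{\Omega_1}^g(\gamma) = (r_1, r_2)^T$. Substituting $f(q_0) = (1, I) F(\gamma) = p_1 + I p_2$ into $(f * g)(q_0) = f(q_0) r_1 + I f(q_0) r_2$ and collecting terms via $I^2 = -1$ reproduces precisely $(p_1 r_1 - p_2 r_2) + I(p_2 r_1 + p_1 r_2)$, matching the display above.

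If instead $q_0 \in \mathbb{R}^n$, then $\gamma(1) \in \mathbb{R}^n$ and $\mathfrak{I}(q_0) = 0$, so $\mathscr{F}_{\Omega_1}^g(q_0) = (g(q_0), 0)^T$ and $(f * g)(q_0) = f(q_0) g(q_0)$. To simplify the right-hand side, I would invoke the $\Omega_1$-stem-preserving hypothesis to choose distinct $J_1, J_2 \in \mathbb{S}(\Omega_2, \gamma)$; since $\gamma^{J_k}(1) = \gamma(1) = q_0$ for both $k$, applying \eqref{eq-fgbp} to any $G \in \mathcal{PSS}(g)$ forces $r_2 = 0$ and $r_1 = g(q_0)$. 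Then $(1, I)\, H(\gamma) = (p_1 + I p_2)\, g(q_0) = f(q_0) g(q_0)$, as required.

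The principal technical obstacle is this real case: one must know $q_0 \in \Omega_2$ to even form $g(q_0)$, and one must draw on both the well-definedness of $F_{\Omega_1}^g$ (cited as \cite[Proposition 3.9]{Dou2023003}) and on the existence of two distinct units in $\mathbb{S}(\Omega_2, \gamma)$ to collapse the second component of the stem to zero. Once this bookkeeping is in place, the whole proposition reduces to the pointwise quaternionic identity exhibited in the non-real case.
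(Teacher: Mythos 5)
The paper does not actually prove this proposition here---it imports it by citation from \cite[Proposition 4.5]{Dou2023003}---so your proposal can only be judged on its own terms. Your overall strategy (verify directly that $F * F_{\Omega_1}^g$ satisfies the defining identity \eqref{eq-fcg} for $f*g$, splitting on whether the endpoint is real) is the natural one. The matrix computation of the pointwise $*$-product is correct, and the real case is handled properly: condition (i) of Definition \ref{defn-lo1o2} does supply two distinct units in $\mathbb{S}(\Omega_2,\gamma)$, which via \eqref{eq-fgbp} forces $r_2=0$ and $r_1=g(q_0)$ (this is exactly the content of the quoted lemma giving \eqref{eq-fgamma R}, which you could simply cite instead of rederiving).

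There is, however, a genuine gap in the non-real case: the claim that $q_0=\gamma^I(1)\in\mathbb{C}_I^n\setminus\mathbb{R}^n$ forces $\mathfrak{I}(q_0)=I$ is false. By definition, writing $\gamma(1)=x+yi$ and letting $\imath$ be the first index with $y_\imath\neq 0$, one has $\mathfrak{I}(q_0)=\mathrm{sgn}(y_\imath)\,I$; if $y_\imath<0$ then $\mathfrak{I}(q_0)=-I$, and $\gamma$ is then not an admissible representative path in the definition of $\mathscr{F}_{\Omega_1}^g(q_0)$ --- one must use $\overline{\gamma}$ instead, so that $\mathscr{F}_{\Omega_1}^g(q_0)=F_{\Omega_1}^g(\overline{\gamma})=(r_1,-r_2)^T$ by \eqref{eq-fgamma conj}. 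The desired identity still holds because the two sign changes cancel: $\bigl(f(q_0),\,-I f(q_0)\bigr)(r_1,-r_2)^T=f(q_0)r_1+I f(q_0)r_2$, the same expression you obtain when $\mathfrak{I}(q_0)=I$. So the gap is reparable, but as written your argument covers only half of the non-real points; note that the paper's own treatment of the exactly analogous situation (the lemma establishing \eqref{eq-fc}) explicitly splits into the subcases $\mathfrak{I}(q)=I$ and $\mathfrak{I}(q)=-I$ and invokes the conjugation identity, and your proof needs the same extra step.
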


We will illustrate that within non-axially symmetric domains, the roles typically filled by axially symmetric sets can be successfully taken over by those classified as self-stem-preserving domains.

\begin{definition}
	A subset \(\Omega \subset \mathbb{H}_s^n\) is called \textit{\textbf{self-stem-preserving}} if \(\Omega\) is real-path-connected and \(\Omega\)-stem-preserving.
\end{definition}

\begin{lem}\cite[Lemma 3.12]{Dou2023003}.
	Let $\Omega_1\subset\mathbb{H}_s^n$ be real-path-connected, $\Omega_2\subset\mathbb{H}_s^n$ be $\Omega_1$-stem-preserving, and $f:\Omega_2\rightarrow\mathbb{H}$ be path-slice. Then
	\begin{equation}\label{eq-lfo1}
		\left.\mathscr{F}_{\Omega_1}^{f}\right|_{(\Omega_1)_\mathbb{R}}
		=\begin{pmatrix}
			f|_{(\Omega_1)_\mathbb{R}}\\0
		\end{pmatrix}.
	\end{equation}
\end{lem}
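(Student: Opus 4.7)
The plan is to observe that this lemma is essentially a matter of unpacking the definition of $\mathscr{F}_{\Omega_1}^f$ at real points, combined with a short verification that the restriction $f|_{(\Omega_1)_\mathbb{R}}$ makes sense. In other words, the substantive content is purely the inclusion $(\Omega_1)_\mathbb{R} \subset \Omega_2$; once this is in hand, the equality falls out of the second branch of (\ref{eq-mathscr f def}).

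First I would show that $(\Omega_1)_\mathbb{R} \subset \Omega_2$. Fix $q \in (\Omega_1)_\mathbb{R}$ and let $\gamma_q:[0,1] \to \mathbb{C}^n$ be the constant path at $q$. Because $q \in \mathbb{R}^n$, the map $\Psi_i^I$ fixes $q$ for every $I \in \mathbb{S}$, so $\gamma_q^I = \gamma_q \subset \Omega_1$; in particular $\gamma_q \in \mathscr{P}(\mathbb{C}^n,\Omega_1)$. The $\Omega_1$-stem-preserving hypothesis forces $|\mathbb{S}(\Omega_2,\gamma_q)| \geq 2$, so there exists at least one $I \in \mathbb{S}$ with $\gamma_q^I \subset \Omega_2$. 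Since $\gamma_q^I$ is the constant path at $q$, this yields $q \in \Omega_2$. Thus $f|_{(\Omega_1)_\mathbb{R}}$ is well defined and the right-hand side of (\ref{eq-lfo1}) is meaningful.

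Second, I would read off the equality directly from the definition. For any $q \in (\Omega_1)_\mathbb{R} \subset \mathbb{R}^n$, the condition $q \notin \mathbb{R}^n$ in the piecewise definition (\ref{eq-mathscr f def}) fails, so the "otherwise" branch applies and gives
\begin{equation*}
\mathscr{F}_{\Omega_1}^f(q) = \begin{pmatrix} f(q) \\ 0 \end{pmatrix}.
\end{equation*}
This is exactly the value of the right-hand side of (\ref{eq-lfo1}) at $q$, so the two functions agree pointwise on $(\Omega_1)_\mathbb{R}$.

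There is no serious obstacle here: the only non-trivial step is the inclusion $(\Omega_1)_\mathbb{R} \subset \Omega_2$, which reduces to applying the stem-preserving property to constant paths in $\mathbb{R}^n$. Everything else is a direct appeal to the defining formula.
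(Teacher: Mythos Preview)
Your argument is correct. The paper does not supply its own proof of this lemma; it is quoted as \cite[Lemma~3.12]{Dou2023003} and used as a preliminary. Given the way $\mathscr{F}_{\Omega_1}^f$ is defined in (\ref{eq-mathscr f def}), the equality on $(\Omega_1)_\mathbb{R}$ is literally the ``otherwise'' branch of the definition, so the only point requiring justification is that $f(q)$ is defined for $q\in(\Omega_1)_\mathbb{R}$, i.e.\ that $(\Omega_1)_\mathbb{R}\subset\Omega_2$. Your use of the constant path $\gamma_q$ together with condition~(i) of Definition~\ref{defn-lo1o2} settles this cleanly, and is exactly the argument one would expect the cited source to give.
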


\begin{lem}\cite[Lemma 3.14]{Dou2023003}.
	Let \(\Omega_1 \subset \mathbb{H}_s^n\) be real-path-connected and \(\Omega_2 \subset \mathbb{H}_s^n\) be \(\Omega_1\)-stem-preserving. If \(f: \Omega_2 \rightarrow \mathbb{H}\) is path-slice and \(\gamma \in \mathscr{P}(\mathbb{C}^n, \Omega_1)\) with \(\gamma(1) \in \mathbb{R}^n\), then
	\begin{equation}\label{eq-fgamma R}
		F_{\Omega_1}^{f}(\gamma) = \begin{pmatrix}
			f \circ \gamma(1) \\ 0
		\end{pmatrix}.
	\end{equation}
\end{lem}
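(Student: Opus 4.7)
The plan is to reduce the statement to the explicit formula \eqref{eq-fgbp} from the preceding lemma, which expresses the stem function $F(\gamma)$ in terms of two values $f\circ\gamma^I(1)$ and $f\circ\gamma^J(1)$ for any two distinct $I,J\in\mathbb{S}(\Omega_2,\gamma)$, and then to exploit the fact that when $\gamma(1)\in\mathbb{R}^n$, these two values coincide.

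First I would verify that there actually exist two such units. By condition (i) in Definition \ref{defn-lo1o2} ($\Omega_1$-stem-preservation), $|\mathbb{S}(\Omega_2,\gamma)|\ge 2$ for every $\gamma\in\mathscr{P}(\mathbb{C}^n,\Omega_1)$, so in particular $\gamma\in\mathscr{P}(\mathbb{C}^n,\Omega_2)$ and we may pick distinct $I,J\in\mathbb{S}(\Omega_2,\gamma)$. Therefore any path-slice stem function $G\in\mathcal{PSS}(f)$ is defined at $\gamma$, and by \eqref{eq-F def} the value $F_{\Omega_1}^f(\gamma)$ equals $G(\gamma)$.

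Second, I would apply \eqref{eq-fgbp} to $G$ at $\gamma$ with the chosen $I,J$. Since $\gamma(1)\in\mathbb{R}^n$, the definition of $\Psi_i^K$ gives $\gamma^K(1)=\gamma(1)$ for every $K\in\mathbb{S}$, so in particular $f\circ\gamma^I(1)=f\circ\gamma^J(1)=f(\gamma(1))$. Writing $G(\gamma)=(G_1,G_2)^T$, the matrix equation \eqref{eq-fgbp} becomes the pair $G_1+IG_2=f(\gamma(1))$ and $G_1+JG_2=f(\gamma(1))$; subtracting and using $I\ne J$ (so $I-J$ is invertible in $\mathbb{H}$) yields $G_2=0$ and hence $G_1=f(\gamma(1))$. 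Combining with $F_{\Omega_1}^f(\gamma)=G(\gamma)$ gives \eqref{eq-fgamma R}.

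There is no real obstacle here: the argument is a two-line computation once the correct formula is in hand. The only subtlety worth stating carefully is that $\gamma^K(1)$ is genuinely independent of $K$ because the imaginary part of $\gamma(1)$ vanishes, which is what collapses the $2\times 2$ system to force the second component of the stem value to be $0$.
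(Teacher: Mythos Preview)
Your argument is correct. The paper does not actually prove this lemma; it merely cites it as \cite[Lemma 3.14]{Dou2023003}, so there is no in-paper proof to compare against. Your approach---invoking condition (i) of Definition~\ref{defn-lo1o2} to obtain two distinct units $I,J\in\mathbb{S}(\Omega_2,\gamma)$, applying formula \eqref{eq-fgbp}, and then using $\gamma^I(1)=\gamma^J(1)=\gamma(1)$ to collapse the linear system---is the natural and essentially forced argument, and is almost certainly what the cited reference does as well.
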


\begin{lem}\cite[Lemma 3.13]{Dou2023003}.
	Let $\Omega_1\subset\mathbb{H}_s^n$ be real-path-connected, $\Omega_2\subset\mathbb{H}_s^n$ be $\Omega_1$-stem-preserving, and $f:\Omega_2\rightarrow\mathbb{H}$ be path-slice. Then
	\begin{equation}\label{eq-fgamma conj}
		F_{\Omega_1}^{f}(\gamma)
		=\begin{pmatrix}
			1\\ & -1
		\end{pmatrix}F_{\Omega_1}^{f}(\overline{\gamma}),\qquad\forall\ \gamma\in\mathscr{P}(\mathbb{C}^n,\Omega_1).
	\end{equation}
\end{lem}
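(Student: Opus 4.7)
The plan is to reduce~\eqref{eq-fgamma conj} to a direct application of formula~\eqref{eq-fgbp} combined with one small matrix factorization. The key geometric input is the interaction between path-conjugation and the extension maps $\Psi_i^I$: writing $\gamma(t)=x(t)+iy(t)$, one sees $\Psi_i^I(\overline{\gamma})(t)=x(t)-Iy(t)=\Psi_i^{-I}(\gamma)(t)$, so $\overline{\gamma}^{\,I}=\gamma^{-I}$. Consequently $\mathbb{S}(\Omega,\overline{\gamma})=-\mathbb{S}(\Omega,\gamma)$ for any $\Omega\subset\mathbb{H}_s^n$, and the endpoints satisfy $\overline{\gamma}^{\,-I}(1)=\gamma^{I}(1)$.

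With this in hand, I would first verify that $\overline{\gamma}\in\mathscr{P}(\mathbb{C}^n,\Omega_1)$, so that $F_{\Omega_1}^{f}(\overline{\gamma})$ is defined: any $I\in\mathbb{S}(\Omega_1,\gamma)$ yields $-I\in\mathbb{S}(\Omega_1,\overline{\gamma})$. Using that $\Omega_2$ is $\Omega_1$-stem-preserving, I then pick two distinct units $I,J\in\mathbb{S}(\Omega_2,\gamma)$; their negatives $-I,-J$ are distinct elements of $\mathbb{S}(\Omega_2,\overline{\gamma})$. Applying~\eqref{eq-fgbp} to $\gamma$ with $I,J$ and to $\overline{\gamma}$ with $-I,-J$ produces
\[
F_{\Omega_1}^{f}(\gamma)=\begin{pmatrix}1&I\\1&J\end{pmatrix}^{-1}\begin{pmatrix}f\circ\gamma^I(1)\\ f\circ\gamma^J(1)\end{pmatrix}
\]
and, invoking $\overline{\gamma}^{\,-I}(1)=\gamma^{I}(1)$ (and similarly for $J$),
\[
F_{\Omega_1}^{f}(\overline{\gamma})=\begin{pmatrix}1&-I\\1&-J\end{pmatrix}^{-1}\begin{pmatrix}f\circ\gamma^I(1)\\ f\circ\gamma^J(1)\end{pmatrix},
\]
so the two expressions share the same right-hand column vector.

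The proof then collapses to the matrix identity
\[
\begin{pmatrix}1&I\\1&J\end{pmatrix}=\begin{pmatrix}1&-I\\1&-J\end{pmatrix}\begin{pmatrix}1&\\&-1\end{pmatrix},
\]
which is immediate. Inverting and observing that $\begin{pmatrix}1&\\&-1\end{pmatrix}$ is its own inverse, I obtain $\begin{pmatrix}1&I\\1&J\end{pmatrix}^{-1}=\begin{pmatrix}1&\\&-1\end{pmatrix}\begin{pmatrix}1&-I\\1&-J\end{pmatrix}^{-1}$, and substituting into the formula for $F_{\Omega_1}^{f}(\gamma)$ yields~\eqref{eq-fgamma conj}. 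I do not anticipate a genuine obstacle: the only point requiring care is confirming that the stem-preserving hypothesis supplies two distinct units in each of $\mathbb{S}(\Omega_2,\gamma)$ and $\mathbb{S}(\Omega_2,\overline{\gamma})$, and this follows automatically from the identity $\mathbb{S}(\Omega_2,\overline{\gamma})=-\mathbb{S}(\Omega_2,\gamma)$.
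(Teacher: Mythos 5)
Your argument is correct: the identity $\overline{\gamma}^{\,I}=\gamma^{-I}$, the resulting equality of the data vectors $\bigl(f\circ\gamma^I(1),\,f\circ\gamma^J(1)\bigr)^T$ for $\gamma$ with $(I,J)$ and for $\overline{\gamma}$ with $(-I,-J)$, and the factorization $\begin{pmatrix}1&I\\1&J\end{pmatrix}=\begin{pmatrix}1&-I\\1&-J\end{pmatrix}\begin{pmatrix}1&\\&-1\end{pmatrix}$ together give \eqref{eq-fgamma conj} via \eqref{eq-fgbp}, with the stem-preserving hypothesis (condition (i)) supplying the two distinct units needed on each side. The paper itself states this lemma as an import from \cite[Lemma 3.13]{Dou2023003} without reproducing a proof, so there is nothing in-text to compare against, but your route is the natural one and I see no gap.
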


Consider complex numbers \(z, w \in \mathbb{C}\) and denote \(\mathcal{L}_z^w\) as the linear segment connecting \(z\) to \(w\). Given a path \(\gamma\) in the set \(\mathscr{P}(\mathbb{C}^n)\) and a positive  radius \(r\), we define the set \(B_{\mathscr{P}(\mathbb{C}^n)}(\gamma, r)\) as follows:
\begin{equation}\label{eq-BP def}
	B_{\mathscr{P}(\mathbb{C}^n)}(\gamma, r) := 
	\big	\{\gamma \circ \mathcal{L}_{\gamma(1)}^z : z \in B_{\mathbb{C}}(\gamma(1), r)\big\}.
\end{equation}
Here, \(B_{\mathbb{C}}(\gamma(1), r)\) represents the ball in the complex plane \(\mathbb{C}\) centered at \(\gamma(1)\) with a radius of \(r\). This set essentially forms a collection of paths derived from \(\gamma\) by extending its endpoint \(\gamma(1)\) along all possible directions within the radius \(r\) in the complex plane.
For simplicity, we make the  convention: when \( r = 0 \), we set 
 $B_{\mathscr{P}(\mathbb{C}^n)}(\gamma, r)$ as   the empty set  $\phi$.

Define for \(\gamma \in \mathscr{P}(\mathbb{C}^n)\) and \(r > 0\),
\begin{equation}\label{eq- L gamma def}
	\begin{split}
		\mathscr{L}_{\gamma}\quad:\quad B_{\mathbb{C}}(\gamma(1),r)\quad &\xlongrightarrow[\hskip1cm]{}\quad B_{\mathscr{P}(\mathbb{C}^n)}(\gamma,r),
		\\ z\qquad\ &\shortmid\!\xlongrightarrow[\hskip1cm]{}\ \quad\gamma\circ\mathcal{L}_{\gamma(1)}^z.
	\end{split}
\end{equation}

In our study, we introduce several constants that are crucial for understanding the behavior of functions within quaternionic spaces. Let's consider a subset \(\Omega\) of the \(n\)-dimensional weakly slice cone \(\mathbb{H}_s^n\), a path \(\gamma\) in \(\mathscr{P}(\mathbb{C}^n)\), and a subset \(\mathbb{S}'\) of the set \(\mathbb{S}(\Omega, \gamma)\), which includes specific quaternionic imaginary units. We define the following constants:

\begin{itemize}
\item  \(r_{\gamma, \Omega}\) is defined as the supremum   of all radii \(r\) such that the set of paths \(B_{\mathscr{P}(\mathbb{C}^n)}(\gamma, r)\) is entirely contained within \(\mathscr{P}(\mathbb{C}^n, \Omega)\):
\[r_{\gamma, \Omega} := \sup \left\{ r \in [0, \infty) : B_{\mathscr{P}(\mathbb{C}^n)}(\gamma, r) \subset \mathscr{P}(\mathbb{C}^n, \Omega) \right\}.\]

\item  \(r_{\gamma, \Omega}^{\mathbb{S}'}\) is defined as the supremum of all radii \(r\) such that, for every imaginary unit \(I\) in \(\mathbb{S}'\), the ball \(B_I(\gamma^I(1), r)\) is contained within the slice \(\Omega_I\):
\[r_{\gamma, \Omega}^{\mathbb{S}'} := \sup \left\{ r \in [0, \infty) : B_I(\gamma^I(1), r) \subset \Omega_I, \forall I \in \mathbb{S}' \right\}.\]

\item  \(r_{\gamma, \Omega}^2\) is defined as the supremum among all \(r_{\gamma, \Omega}^{\mathbb{S}''}\), where \(\mathbb{S}''\) is a subset of \(\mathbb{S}(\Omega, \gamma)\) containing at least two elements:
\[r_{\gamma, \Omega}^2 := \sup \left\{ r_{\gamma, \Omega}^{\mathbb{S}''} : \mathbb{S}'' \subset \mathbb{S}(\Omega, \gamma) \text{ with } |\mathbb{S}''| \geqslant  2 \right\}.\]
\end{itemize}

In these definitions, \(B_I(\gamma^I(1), r)\) represents the set of all points \(q\) in the complex plane \(\mathbb{C}_I\) that are within a distance \(r\) from the point \(\gamma^I(1)\):
\[B_I(\gamma^I(1), r) := \{ q \in \mathbb{C}_I : |q - \gamma^I(1)| < r \}.\]

These constants play a pivotal role in determining the extent to which paths in complex space can be extended while remaining within the quaternionic subset \(\Omega\).

We are expanding the idea of holomorphy of stem functions defined in complex plane $\mathbb{C}^n$ to encompass the one in path spaces.

\begin{definition}\label{def-stem holo}
	Let \(\Omega \subset \mathbb{H}_s^n\) and \(\gamma \in \mathscr{P}(\mathbb{C}^n)\). A function \(F: \mathscr{P}(\mathbb{C}^n, \Omega) \rightarrow \mathbb{H}^{2 \times 1}\) is called \textit{\textbf{holomorphic}} at \(\gamma\), if there exists \(r > 0\) such that \(B_{\mathscr{P}(\mathbb{C}^n)}(\gamma, r) \subset \mathscr{P}(\mathbb{C}^n, \Omega)\) and
	\begin{equation}\label{eq-holo def}
		\frac{1}{2} \left( \frac{\partial}{\partial x_\ell} + \sigma \frac{\partial}{\partial y_\ell} \right) (F \circ \mathscr{L}_{\gamma})(x + yi) = 0 
	\end{equation}
	for each \(x + yi \in B_{\mathbb{C}}(\gamma(1), r)\) and \(\ell \in \{1, \ldots, n\}\).
	Furthermore, \(F\) is termed holomorphic in \(U \subset \mathscr{P}(\mathbb{C}^n, \Omega)\) if it is holomorphic at each \(\gamma \in U\), and holomorphic in \(\mathscr{P}(\mathbb{C}^n, \Omega)\) if it is holomorphic at every \(\gamma\).
\end{definition}

We revisit some key properties of slice regular functions.

\begin{prop}\label{pr-FOmega1f holo} \cite[Proposition 3.6]{Dou2023004}.
	Let \(\Omega_1 \in \tau_s(\mathbb{H}_s^n)\) be real-path-connected, \(\Omega_2 \in \tau_s(\mathbb{H}_s^n)\) be \(\Omega_1\)-stem-preserving, and \(f \in \mathcal{SR}(\Omega_2)\). Then \(F_{\Omega_1}^f\) is holomorphic.
\end{prop}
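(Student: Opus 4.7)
The plan is to verify the Cauchy--Riemann condition \eqref{eq-holo def} at an arbitrary $\gamma \in \mathscr{P}(\mathbb{C}^n, \Omega_1)$ by locally expressing $F_{\Omega_1}^f \circ \mathscr{L}_\gamma$ through the two-slice representation \eqref{eq-fgbp}, and then transferring the slicewise holomorphy of $f$ to the stem function.

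First I would use the $\Omega_1$-stem-preserving hypothesis to pick two distinct units $I, J \in \mathbb{S}(\Omega_2, \gamma)$. The slice-openness of $\Omega_1$ and $\Omega_2$, combined with $\gamma^K(1) \in (\Omega_2)_K$ for $K \in \{I, J\}$, yields a common radius $r > 0$ such that $B_{\mathscr{P}(\mathbb{C}^n)}(\gamma,r) \subset \mathscr{P}(\mathbb{C}^n,\Omega_1)$ and $B_K(\gamma^K(1),r) \subset (\Omega_2)_K$ for both $K$. For every $z \in B_\mathbb{C}(\gamma(1), r)$ the extended path $\gamma_z := \mathscr{L}_\gamma(z)$ then satisfies $\{I, J\} \subset \mathbb{S}(\Omega_2, \gamma_z)$, so \eqref{eq-fgbp} (applied to any $G \in \mathcal{PSS}(f)$, whose restriction to $\mathscr{P}(\mathbb{C}^n, \Omega_1)$ equals $F_{\Omega_1}^f$) gives uniformly on the disk
$$
(F_{\Omega_1}^f \circ \mathscr{L}_\gamma)(z) = \begin{pmatrix} 1 & I \\ 1 & J \end{pmatrix}^{-1} \begin{pmatrix} f_I(\Psi_i^I(z)) \\ f_J(\Psi_i^J(z)) \end{pmatrix}.
$$
Writing the components as $F^1, F^2$, this is equivalent to the scalar identities $F^1(z) + K F^2(z) = f_K(\Psi_i^K(z))$ for $K \in \{I, J\}$.

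Next I would differentiate both identities in $x_\ell$ and $y_\ell$ and substitute the slice Cauchy--Riemann equations $\partial_{x_\ell} f_K + K \partial_{y_\ell} f_K = 0$. Using $K^2 = -1$, this collapses to the single relation
$$
(\partial_{x_\ell} F^1 - \partial_{y_\ell} F^2) + K(\partial_{x_\ell} F^2 + \partial_{y_\ell} F^1) = 0, \qquad K \in \{I, J\}.
$$
Subtracting the two instances yields $(I-J)(\partial_{x_\ell}F^2 + \partial_{y_\ell}F^1) = 0$; since $I \neq J$ and $\mathbb{H}$ is a division ring, $I-J$ is invertible, forcing both parenthesized expressions to vanish separately. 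This is exactly \eqref{eq-holo def} for $F_{\Omega_1}^f$ at $\gamma$, and since $\gamma$ was arbitrary the holomorphy follows on all of $\mathscr{P}(\mathbb{C}^n, \Omega_1)$.

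The main obstacle I foresee is not the algebra but the uniform neighborhood bookkeeping in the first step: $r$ must simultaneously keep the paths $\gamma_z$ inside $\mathscr{P}(\mathbb{C}^n,\Omega_1)$ and keep both endpoints $\gamma_z^I(1)$ and $\gamma_z^J(1)$ inside $\Omega_2$, so that the two-slice formula \eqref{eq-fgbp} applies on a fixed disk rather than on a $z$-dependent set. Once this is secured, the availability of two distinct slices (guaranteed precisely by the $\Omega_1$-stem-preserving condition) together with the division-ring structure of $\mathbb{H}$ is exactly what decouples the quaternion identity into the two real scalar components of the stem Cauchy--Riemann equation involving $\sigma$.
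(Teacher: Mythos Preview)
The paper does not prove this proposition in its own text; it is quoted verbatim from \cite[Proposition 3.6]{Dou2023004} and used as a black box. So there is no in-paper proof to compare against.

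That said, your argument is correct and is the natural one. The two ingredients you identify are exactly the right ones: the $\Omega_1$-stem-preserving condition supplies two distinct units $I\neq J$ in $\mathbb{S}(\Omega_2,\gamma)$, and slice-openness of both $\Omega_1$ and $\Omega_2$ lets you choose a single radius $r>0$ so that every extended path $\gamma_z=\mathscr{L}_\gamma(z)$ lies in $\mathscr{P}(\mathbb{C}^n,\Omega_1)$ \emph{and} still has $I,J\in\mathbb{S}(\Omega_2,\gamma_z)$. With that, \eqref{eq-fgbp} gives the local two-slice representation, the slice Cauchy--Riemann equations for $f_I,f_J$ transfer as you wrote, and the invertibility of $I-J$ in the division ring $\mathbb{H}$ decouples the resulting identity into the two scalar equations that together form \eqref{eq-holo def}. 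The bookkeeping concern you flag (a uniform $r$) is genuine but is handled exactly as you outline, using the convexity of the Euclidean balls $B_K(\gamma^K(1),r)$ to keep the appended line segments inside $(\Omega_2)_K$; this is essentially Lemma~\ref{pr-r gamma Omega S >0} for the finite set $\{I,J\}$. One small point worth making explicit is that real differentiability of $F_{\Omega_1}^f\circ\mathscr{L}_\gamma$ is inherited from that of $f_I,f_J$ via the constant-coefficient linear system, so the partial derivatives you manipulate are indeed well defined.
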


\begin{theorem}\label{thm-slice regular algebra} \cite[Theorem 4.4]{Dou2023004}.
	Let \(\Omega \subset \mathbb{H}_s^n\) be self-stem-preserving. Then \((\mathcal{SR}(\Omega), +, *)\) forms an associative unitary real algebra.
\end{theorem}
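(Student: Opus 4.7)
The plan is to verify the algebra axioms in order: that $\mathcal{SR}(\Omega)$ is a real vector space, that the constant function $1$ is a two-sided unit for $*$, that $*$ is $\mathbb{R}$-bilinear and closed on $\mathcal{SR}(\Omega)$, and finally that $*$ is associative. The vector space structure is immediate since the Cauchy--Riemann operators $\tfrac{1}{2}(\partial_{x_\ell} + I\partial_{y_\ell})$ in the definition of slice regularity are $\mathbb{R}$-linear. For the unit, a direct application of the defining relation of a stem function forces $\mathscr{F}_\Omega^{1}(q) = (1,0)^T$, so $g * 1 = g$, while $(1,\mathfrak{I}(q))\,\mathscr{F}_\Omega^{g}(q) = g(q)$ holds along any path realizing $q$, giving $1 * g = g$.

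For closure, Proposition~\ref{pr-1} already yields $f * g \in \mathcal{PS}(\Omega)$ with stem function $F_\Omega^{f} * F_\Omega^{g}$; the task is to upgrade this to slice regularity. By Proposition~\ref{pr-FOmega1f holo} both $F_\Omega^{f}$ and $F_\Omega^{g}$ are holomorphic in the sense of Definition~\ref{def-stem holo}, and the key step is to show that the $*$-product preserves this holomorphy. I would introduce the matrix representative $\tilde F := F_1\mathbb{I} + F_2\sigma$ of a column $F = (F_1, F_2)^T$; a short computation using the form of $\sigma$ shows that $\tilde F$ commutes with $\sigma$, and that $F * G = \tilde F\,\tilde G\,e_1$. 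The holomorphy assumption rewrites as $\partial_{y_\ell}\tilde F = \sigma\,\partial_{x_\ell}\tilde F$ and similarly for $\tilde G$, so the commutation yields
\[
\partial_{y_\ell}(\tilde F\,\tilde G) = (\sigma\,\partial_{x_\ell}\tilde F)\,\tilde G + \tilde F\,(\sigma\,\partial_{x_\ell}\tilde G) = \sigma\,\partial_{x_\ell}(\tilde F\,\tilde G),
\]
and $\sigma^2 = -\mathbb{I}$ then gives $(\partial_{x_\ell} + \sigma\,\partial_{y_\ell})(\tilde F\,\tilde G) = 0$; extracting the first column provides holomorphy of $F * G$. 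Transferring this back to slices via $(f * g) \circ \gamma^{I}(1) = (1, I)(F * G)(\gamma)$ extended along the segments $\mathscr{L}_\gamma$ yields slice regularity of $f * g$ on each $\mathbb{C}_I^n$. $\mathbb{R}$-bilinearity is transparent from the pointwise formula.

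Associativity then reduces to the stem level. The map $p \mapsto \tilde p$ is an $\mathbb{R}$-linear injection of $\mathbb{H}^{2\times 1}$ into $2\times 2$ quaternionic matrices whose image commutes with $\sigma$, so a direct check gives $\widetilde{p * q} = \tilde p\,\tilde q$, meaning iterated pointwise $*$-products correspond to matrix products. Associativity of matrix multiplication therefore gives $(p * q) * r = p * (q * r)$ for all $p, q, r \in \mathbb{H}^{2\times 1}$, hence $(F * G) * H = F * (G * H)$ as stem functions. Since $\Omega$ is real-path-connected and $\Omega$-stem-preserving, a path-slice function is determined by any of its path-slice stem functions, so this associativity descends to $(f * g) * h = f * (g * h)$ in $\mathcal{SR}(\Omega)$. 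The main obstacle throughout is the holomorphy-preservation computation, as it is the only step that genuinely intertwines the complex-analytic and algebraic structures; every other axiom is formal once the matrix model of $*$ is in hand and one knows that stem functions are unique on real-path-connected domains.
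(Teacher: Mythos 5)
This statement is not proved in the paper at all: it is imported verbatim as \cite[Theorem 4.4]{Dou2023004}, so there is no in-paper proof to compare against. Judged on its own terms, your reconstruction is sound and uses exactly the machinery this paper does develop for adjacent results. The decisive observations are correct: the matrix model $\tilde F=F_1\mathbb{I}+F_2\sigma$ satisfies $\tilde F\sigma=\sigma\tilde F$ (since a $2\times 2$ matrix commutes with $\sigma$ iff it has the form $a\mathbb{I}+c\sigma$), whence $\widetilde{p*q}=\tilde p\,\tilde q$, which simultaneously gives associativity of the pointwise product \eqref{eq-star product pointwise} and the Leibniz computation showing that the holomorphy of Definition \ref{def-stem holo} is preserved under $*$; the transfer back to slice regularity via $I(1,I)=(1,I)\sigma$ as in \eqref{eq-i sigma} is precisely the argument the paper uses to prove Theorem \ref{thm-f c slice regular}, and the descent of associativity from stem functions to functions is legitimate because real-path-connectedness makes every point an endpoint $\gamma^I(1)$, so a path-slice function is recovered from any stem function via \eqref{eq-fcg}. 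Two small points you leave implicit but which are easy to supply: the identity $\mathscr{F}_\Omega^{1}\equiv(1,0)^T$ needs condition (i) of stem-preservation (at least two admissible units per path) together with \eqref{eq-fgbp} to force uniqueness off $\mathbb{R}^n$; and right-linearity of $*$ in the second argument rests on the linearity of $g\mapsto F_\Omega^g$, again from \eqref{eq-fgbp}. Neither is a gap of substance, and your identification of holomorphy-preservation as the one step that genuinely couples the analytic and algebraic structures is accurate.
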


\section{Slice conjugation of path-slice functions}

In this section, we delve into the intriguing concept of $\Omega_1$-slice conjugation for path-slice functions. This exploration is driven by the quest to identify an effective form of `conjugation' for a given slice regular function within the confines of specific domain restrictions. While conventional notions of conjugation may not always be applicable, we discover that under certain conditions, a unique slice regular function can embody this role of conjugation. This function, termed the path-slice conjugation of the original function, emerges as a pivotal element in our analysis. Furthermore, we extend this concept to encompass path-slice functions as well, broadening the scope of our study.

This innovative approach to conjugation of slice regular functions not only enhances our understanding of their structure but also paves the way for further explorations into the properties and applications of these functions within slice quaternionic analysis.

\begin{defn}\label{eq-lo1}
	Let $\Omega_1\subset\mathbb{H}_s^n$ be real-path-connected, $\Omega_2\subset\mathbb{H}_s^n$ be $\Omega_1$-stem-preserving, and $f\in\mathcal{PS}(\Omega_2)$. Then we call
	\begin{equation}\label{eq-f Omega1 c def}
		f_{\scriptscriptstyle{\Omega_1}}^c:=(1,\mathfrak{I}) \mathscr{F}_{\Omega_1}^{f,c}
	\end{equation}
	the \textit{\textbf{$\Omega_1$-slice conjugation}} of $f$ on $\Omega_1$, where
	\begin{equation*}
		\mathscr{F}_{\Omega_1}^{f,c}:=\begin{pmatrix}
			\mathscr{F}_{\Omega_1}^{f,c,1}\\ \\ \mathscr{F}_{\Omega_1}^{f,c,2}
		\end{pmatrix}
		:={\conj}_{\mathbb{H}}\circ\mathscr{F}_{\Omega_1}^{f}=\begin{pmatrix}
			\overline{\mathscr{F}_{\Omega_1}^{f,1}}\\  \\ \overline{\mathscr{F}_{\Omega_1}^{f,2}}
		\end{pmatrix}.
	\end{equation*}
\end{defn}

\begin{rmk}
	Let $\Omega_1\subset\mathbb{H}_s^n$ be real-path-connected, $\Omega_2\subset\mathbb{H}_s^n$ be $\Omega_1$-stem-preserving, and $f\in\mathcal{PS}(\Omega_2)$. By \eqref{prop-f c}, we have  $$f^c_{\scriptscriptstyle{\Omega_1}}=\overline{f}$$ as the conjugation of slice regular functions on axially symmetric domains. In fact, if $\Omega_2=\sigma(I,2)$ and $$f=\sum_{\imath\in\mathbb{N}}\left(\frac{{q-I}}{2}\right)^{*2^\imath},$$ then there is no slice regular function $g$ on $\Omega_2$ such that
	\begin{equation}\label{eq-gf}
		g=\overline{f}\qquad\mbox{on}\qquad(\Omega_2)_\mathbb{R}.
	\end{equation}
	However, if let $$\Omega_1:=\Sigma(I,2)\cap\Sigma(-I,2),$$ then $\Omega_1$ is real-path-connected, $\Omega_2$ is $\Omega_1$-stem-preserving and $f^c_{\scriptscriptstyle{\Omega_1}}$ is a slice regular function (see Theorem \ref{thm-f c slice regular}) with $f^c_{\scriptscriptstyle{\Omega_1}}=\overline{f}$ on $(\Omega_1)_\mathbb{R}=(\Omega_2)_\mathbb{R}$.
\end{rmk}

To show that conjugation maintains the holomorphic nature of functions on path spaces, it is necessary to refer to a few important lemmas.

\begin{lem}\label{pr-gamma exists}
	Let $\Omega\subset\mathbb{H}_s^n$ be real-path-connected, $I\in\mathbb{S}$ and $z\in\mathbb{C}$ with $z^I\in\Omega$. Then there is $\gamma\in\mathscr{P}(\mathbb{C}^n,\Omega)$ such that
	\begin{equation}\label{eq-frakiqi}
		\gamma^{I}\subset\Omega,\qquad\mbox{and}\qquad \gamma^{I}(1)=z^I.
	\end{equation}
\end{lem}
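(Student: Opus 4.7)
My plan is to first apply the definition of real-path-connectedness to the point $z^I\in\Omega$, obtaining a path $\gamma_0\in\mathscr{P}(\mathbb{C}^n,\Omega)$ together with an imaginary unit $J\in\mathbb{S}(\Omega,\gamma_0)$ satisfying $\gamma_0^J(1)=z^I$. Since the lemma demands a witness attached specifically to the prescribed $I$, and $J$ need not coincide with $I$, I will modify $\gamma_0$ into a new path $\gamma$ tailored to $I$, splitting into cases by the relationship between $I$ and $J$.

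In the easy cases $J=\pm I$, I just take $\gamma=\gamma_0$ (when $J=I$) or $\gamma=\overline{\gamma_0}$, the componentwise complex conjugate (when $J=-I$). For $J=-I$, the starting point $\overline{\gamma_0(0)}=\gamma_0(0)\in\mathbb{R}^n$ keeps $\gamma$ in $\mathscr{P}(\mathbb{C}^n)$, and the identity $\Psi_i^I(x-yi)=x-yI=\Psi_i^{-I}(x+yi)$ applied componentwise yields $\overline{\gamma_0}^I=\gamma_0^{-I}=\gamma_0^J\subset\Omega$, so $\gamma^I(1)=\gamma_0^J(1)=z^I$ as required.

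The interesting case is $J\neq\pm I$. The key observation is that $\mathbb{C}_I\cap\mathbb{C}_J=\mathbb{R}$ whenever $I,J\in\mathbb{S}$ are distinct and non-antipodal: any element in the intersection has an imaginary part satisfying $aI=bJ$ for some $a,b\in\mathbb{R}$, which forces $a=b=0$ since $I$ and $J$ are linearly independent in the imaginary part of $\mathbb{H}$. Componentwise this gives $\mathbb{C}_I^n\cap\mathbb{C}_J^n=\mathbb{R}^n$. Since $z^I\in\mathbb{C}_I^n$ and $\gamma_0^J(1)\in\mathbb{C}_J^n$ coincide, the equality forces $z^I\in\mathbb{R}^n$, whence $z\in\mathbb{R}^n$ and $z^I=z$. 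The constant path $\gamma(t)\equiv z$ then trivially lies in $\mathscr{P}(\mathbb{C}^n)$, and for every $I'\in\mathbb{S}$ we have $\gamma^{I'}\equiv z\in\Omega$; in particular $\gamma^I\subset\Omega$ with $\gamma^I(1)=z=z^I$.

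The only real obstacle I anticipate is spotting that the apparent mismatch in the third case is illusory: the very incompatibility of $I$ and $J$ forces the target $z^I$ to lie in $\mathbb{R}^n$, at which point a constant path trivially resolves the lemma. The first two cases reduce to either using $\gamma_0$ as given or conjugating it to flip the sign of the imaginary unit, both of which are mechanical verifications.
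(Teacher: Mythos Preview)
Your proof is correct and follows essentially the same approach as the paper's: both obtain a witnessing path $\beta$ and unit $J$ from real-path-connectedness, then split into the cases $J=\pm I$ (where $\beta$ or $\overline{\beta}$ works) and $J\neq\pm I$ (where $\mathbb{C}_I^n\cap\mathbb{C}_J^n=\mathbb{R}^n$ forces $z^I\in\mathbb{R}^n$ and the constant path suffices). The only cosmetic difference is the order in which the cases are presented.
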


\begin{proof}
	Since $\Omega$ is real-path-connected, there is $\beta\in\mathscr{P}(\mathbb{C}^n,\Omega)$ and $J\in\mathbb{S}$ such that $\beta^J\subset\Omega$ and $\beta^J(1)=z^I$.
	
	If $J\neq \pm I$, then $z^I\in\mathbb{C}_J^n\cap\mathbb{C}_{I}^n=\mathbb{R}^n$. Let
	\begin{equation*}
		\begin{split}
			\gamma\quad:\quad [0,1]\quad &\xlongrightarrow[\hskip1cm]{}\quad \mathbb{C}^n,
			\\ t\quad &\shortmid\!\xlongrightarrow[\hskip1cm]{}\quad z^I.
		\end{split}
	\end{equation*}
	It is easy to check that \eqref{eq-frakiqi} holds.
	
	Otherwise, $J=\pm I$. Let
	\begin{equation*}
		\gamma:=\begin{cases}
			\overline{\beta},&\qquad J\neq I,
			\\\beta,&\qquad\mbox{otherwise}.
		\end{cases}
	\end{equation*}
	It follows from
	\begin{equation*}
		\gamma^{I}=\begin{cases}
			\overline{\beta}^{-J}=\beta^J,&\qquad J\neq I,
			\\\beta^I=\beta^J,&\qquad\mbox{otherwise},
		\end{cases}
	\end{equation*}
	that $\gamma^I\subset\Omega$ and \eqref{eq-frakiqi} holds.
\end{proof}
  		
\begin{lem}\label{pr-r gamma Omega S >0}
	Let $\Omega\in\tau_s\left(\mathbb{H}_s^n\right)$, $\gamma\in\mathscr{P}(\mathbb{C}^n,\gamma)$ and $\mathbb{S}'\subset\mathbb{S}(\Omega,\gamma)$ with $|\mathbb{S}'|<+\infty$. Then $r_{\gamma,\Omega}^{\mathbb{S}'}>0$.
\end{lem}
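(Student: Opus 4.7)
The plan is to exploit the fact that $\Omega$ is slice-open together with the finiteness of $\mathbb{S}'$ to pass from pointwise openness (one radius per imaginary unit) to a uniform radius working for all units in $\mathbb{S}'$ simultaneously.

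First I would observe that, by the definition of the slice topology, $\Omega \in \tau_s(\mathbb{H}_s^n)$ implies $\Omega_I = \Omega \cap \mathbb{C}_I^n \in \tau(\mathbb{C}_I^n)$ for every $I \in \mathbb{S}$, i.e.\ each slice $\Omega_I$ is open in the Euclidean topology on $\mathbb{C}_I^n$. Next, for each fixed $I \in \mathbb{S}' \subset \mathbb{S}(\Omega,\gamma)$, the defining property of $\mathbb{S}(\Omega,\gamma)$ gives $\gamma^I \subset \Omega$, and in particular the endpoint $\gamma^I(1)$ lies in $\Omega_I$. Applying Euclidean openness of $\Omega_I$ at the point $\gamma^I(1)$, there exists some $r_I > 0$ such that $B_I\bigl(\gamma^I(1), r_I\bigr) \subset \Omega_I$.

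Then I would set
\[
r := \min_{I \in \mathbb{S}'} r_I,
\]
which is well defined and strictly positive because $|\mathbb{S}'| < \infty$. For any $I \in \mathbb{S}'$, the monotonicity $B_I\bigl(\gamma^I(1), r\bigr) \subset B_I\bigl(\gamma^I(1), r_I\bigr) \subset \Omega_I$ shows that $r$ belongs to the set whose supremum defines $r_{\gamma,\Omega}^{\mathbb{S}'}$, and hence $r_{\gamma,\Omega}^{\mathbb{S}'} \geqslant r > 0$.

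There is essentially no obstacle here: the argument is a one-line use of local openness plus the fact that a minimum over a finite set of positive numbers is positive. The only thing to be careful about is not to attempt this over the whole of $\mathbb{S}(\Omega,\gamma)$, which may be infinite so that $\inf_I r_I$ could collapse to zero; the hypothesis $|\mathbb{S}'| < \infty$ is exactly what rules this out, and it is worth flagging why the corresponding statement for $r_{\gamma,\Omega}^2$ requires a genuinely different argument.
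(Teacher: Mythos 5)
Your argument is correct and is essentially identical to the paper's own proof: openness of each slice $\Omega_I$ gives a radius $r_I>0$ for each $I\in\mathbb{S}'$, and the minimum over the finite set $\mathbb{S}'$ furnishes a positive lower bound for $r_{\gamma,\Omega}^{\mathbb{S}'}$. Your concluding inequality $r_{\gamma,\Omega}^{\mathbb{S}'}\geqslant r>0$ is in fact slightly more careful than the paper's, which writes a strict inequality there.
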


\begin{proof}
	Let $I\in\mathbb{S}'\subset\mathbb{S}(\Omega,\gamma)$. Then $\gamma^I(1)\in\Omega_I$. Since $\Omega$ is slice-open, $\Omega_I$ is open in $\mathbb{C}_I$. There is $r^I>0$ such that $B_I(\gamma^I(1),r^I)\subset\Omega_I$. Let $r:=\min\{r^I:I\in\mathbb{S}'\}$. By definition, $r_{\gamma,\Omega}^{\mathbb{S}'}>r>0$.
\end{proof}

\begin{lem}
	Consider any real $2 \times 2$ matrix $M \in \mathbb{R}^{2 \times 2}$. The following relation holds when acting on $\mathbb{H}^{2 \times 1}$:
	\begin{equation}\label{eq-sigmaconj}
		M \cdot {\conj}_{\mathbb{H}} = {\conj}_{\mathbb{H}} \circ M.
	\end{equation}
\end{lem}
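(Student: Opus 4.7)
The plan is to verify the identity by a direct componentwise calculation on $\mathbb{H}^{2\times 1}$, treating $\conj_{\mathbb{H}}$ as the map that applies quaternionic conjugation entry by entry. Fix an arbitrary $v = (v_1, v_2)^T \in \mathbb{H}^{2 \times 1}$ and write $M = (m_{ij})_{1 \leq i,j \leq 2}$ with every $m_{ij} \in \mathbb{R}$. Both sides of \eqref{eq-sigmaconj} are then explicit $2\times 1$ columns, and the claim reduces to a pair of scalar identities, one in each coordinate.

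First I would unwind the left-hand side $M \cdot \conj_{\mathbb{H}}(v)$: conjugate entrywise to get $(\overline{v_1}, \overline{v_2})^T$, then multiply by $M$ to obtain the column with $i$-th entry $m_{i1}\overline{v_1} + m_{i2}\overline{v_2}$. Next I would unwind the right-hand side $\conj_{\mathbb{H}}(M v)$: the $i$-th entry of $Mv$ is $m_{i1}v_1 + m_{i2}v_2$, and its conjugate is $\overline{m_{i1}v_1 + m_{i2}v_2}$. Matching the two sides then relies on two standard properties of conjugation in $\mathbb{H}$, namely additivity $\overline{p+q} = \overline{p} + \overline{q}$, and the fact that real scalars are central and fixed by conjugation, so $\overline{m_{ij}\, v_k} = m_{ij}\,\overline{v_k}$ for every $m_{ij} \in \mathbb{R}$.

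There is no substantial obstacle: the lemma is a bookkeeping statement that says conjugation is a morphism of left $\mathbb{R}$-modules and therefore commutes with any real linear map $\mathbb{H}^{2\times 1} \to \mathbb{H}^{2\times 1}$. The only point worth being explicit about is that the hypothesis $M \in \mathbb{R}^{2 \times 2}$, as opposed to $\mathbb{H}^{2\times 2}$, is essential, since quaternionic conjugation is an anti-automorphism of $\mathbb{H}$ rather than an automorphism; with a genuinely quaternionic matrix the identity would acquire a reversal of order. This is exactly the form needed to apply the identity in the subsequent $\Omega_1$-slice conjugation arguments, where $M$ will be specialized to the real matrix $\bigl(\begin{smallmatrix} 1 & \\ & -1 \end{smallmatrix}\bigr)$ appearing in \eqref{eq-fgamma conj}.
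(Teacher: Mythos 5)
Your proposal is correct and follows essentially the same route as the paper's own proof: a direct entrywise computation of both sides for a generic $M=(m_{ij})\in\mathbb{R}^{2\times 2}$ and $v\in\mathbb{H}^{2\times 1}$, reduced to additivity of quaternionic conjugation and the fact that real scalars are fixed by and commute with it. Your added remark on why the reality of $M$ is essential (conjugation being an anti-automorphism of $\mathbb{H}$) is a nice clarification but does not change the argument.
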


\begin{proof}
	Consider a real $2 \times 2$ matrix $$M = \begin{pmatrix}a & b \\ c & d \end{pmatrix}$$ within the space of quaternions $\mathbb{R}^{2 \times 2} \subset \mathbb{H}^{2 \times 2}$, and let $(p, q)^T$ be a column vector in $\mathbb{H}^{2 \times 1}$. We analyze the action of $M$ in conjunction with quaternion conjugation on this vector:
		\begin{align*}
		M \cdot {\conj}_{\mathbb{H}}\begin{pmatrix} p \\ q \end{pmatrix}
		&= \begin{pmatrix} a & b \\ c & d \end{pmatrix} \begin{pmatrix} \overline{p} \\ \overline{q} \end{pmatrix} \\
		&= \begin{pmatrix} a\overline{p} + b\overline{q} \\ c\overline{p} + d\overline{q} \end{pmatrix} \\
		&= \begin{pmatrix} \overline{ap + bq} \\ \overline{cp + dq} \end{pmatrix} \\
		&= {\conj}_{\mathbb{H}}\begin{pmatrix} ap + bq \\ cp + dq \end{pmatrix} \\
		&= {\conj}_{\mathbb{H}} \circ M \begin{pmatrix} p \\ q \end{pmatrix}.
	\end{align*}
	This completes the proof. 
\end{proof}

 \begin{lem}
 	Suppose we have subsets $\Omega_1$ and $\Omega_2$ of $\mathbb{H}_s^n$, where $\Omega_1$ is real-path-connected and $\Omega_2$ is $\Omega_1$-stem-preserving. Let $c$ be a quaternion in $\mathbb{H}$, $f$ a path-slice function on $\Omega_2$, and $\gamma$ a continuous path in $\mathscr{P}(\mathbb{C}^n, \Omega_1)$. For an imaginary unit   $I\in \mathbb S(\Omega_1, \gamma)$  and a point $q := \gamma^I(1)$, the following relation holds:
	\begin{equation}\label{eq-fc}
	(c,Ic) F_{\Omega_1}^{f,c}(\gamma)=(c,-Ic) F_{\Omega_1}^{f,c}(\overline{\gamma})=\left(c,\mathfrak{I}(q)c\right) \mathscr{F}_{\Omega_1}^{f,c}(q),
\end{equation}
 	where
 	\begin{equation*}
 	F_{\Omega_1}^{f,c} 
 	:={\conj}_{\mathbb{H}}\circ F_{\Omega_1}^{f}. 
 \end{equation*}
  \end{lem}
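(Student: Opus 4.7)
The plan is to derive the two equalities separately, using the earlier lemmas about $F_{\Omega_1}^f$ under path reversal and about real endpoints, together with the fact that the real $2\times 2$ matrix $\mathrm{diag}(1,-1)$ commutes with quaternionic conjugation via \eqref{eq-sigmaconj}.

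First I would verify the identity $(c,Ic)F_{\Omega_1}^{f,c}(\gamma)=(c,-Ic)F_{\Omega_1}^{f,c}(\overline{\gamma})$. The conjugation relation $F_{\Omega_1}^{f}(\gamma)=\mathrm{diag}(1,-1)\,F_{\Omega_1}^{f}(\overline{\gamma})$ from \eqref{eq-fgamma conj} combined with \eqref{eq-sigmaconj} gives $F_{\Omega_1}^{f,c}(\gamma)=\mathrm{Conj}_{\mathbb{H}}\circ F_{\Omega_1}^{f}(\gamma)=\mathrm{diag}(1,-1)\,F_{\Omega_1}^{f,c}(\overline{\gamma})$. Multiplying by $(c,Ic)$ on the left produces $(c,-Ic)F_{\Omega_1}^{f,c}(\overline{\gamma})$ immediately.

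For the second equality I would split into cases according to whether $q\in\mathbb{R}^n$. If $q\notin\mathbb{R}^n$, then since $q=\gamma^I(1)\in\mathbb{C}_I^n$ has a non-real component, the definition of $\mathfrak{I}$ forces $\mathfrak{I}(q)=\pm I$. When $\mathfrak{I}(q)=I$, the path $\gamma$ itself satisfies the requirements in the definition of $\mathscr{F}_{\Omega_1}^{f}(q)$, so $\mathscr{F}_{\Omega_1}^{f,c}(q)=F_{\Omega_1}^{f,c}(\gamma)$ and the identity becomes tautological. When $\mathfrak{I}(q)=-I$, one uses $\overline{\gamma}$ in place of $\gamma$ (since $\overline{\gamma}^{-I}=\gamma^I\subset\Omega_1$ with endpoint $q$), giving $\mathscr{F}_{\Omega_1}^{f,c}(q)=F_{\Omega_1}^{f,c}(\overline{\gamma})$, and then the desired identity is exactly the first equality already proved.

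The remaining case is $q\in\mathbb{R}^n$, where $\mathfrak{I}(q)=0$. Here $\gamma(1)\in\mathbb{R}^n$ as well, so by \eqref{eq-fgamma R} we have $F_{\Omega_1}^{f}(\gamma)=(f(q),0)^T$, hence $F_{\Omega_1}^{f,c}(\gamma)=(\overline{f(q)},0)^T$. By the definition of $\mathscr{F}_{\Omega_1}^f$ on $\mathbb{R}^n$, $\mathscr{F}_{\Omega_1}^{f,c}(q)=(\overline{f(q)},0)^T$ as well. Both sides then collapse to $c\,\overline{f(q)}$. The only mild subtlety is the case split at $\mathfrak{I}(q)=\pm I$, which is why I would single out the real and non-real cases for $q$ at the start; once this is organized, each case reduces to one application of an earlier lemma.
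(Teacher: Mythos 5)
Your proposal is correct and follows essentially the same route as the paper's proof: the first equality via \eqref{eq-fgamma conj} and \eqref{eq-sigmaconj}, and the second via the case split $q\notin\mathbb{R}^n$ (with the subcases $\mathfrak{I}(q)=\pm I$ reducing to $F_{\Omega_1}^{f,c}(\gamma)$ or $F_{\Omega_1}^{f,c}(\overline{\gamma})$) versus $q\in\mathbb{R}^n$ (handled by \eqref{eq-fgamma R} and the definition of $\mathscr{F}_{\Omega_1}^{f}$ on the reals). No gaps to report.
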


\begin{proof}
	Let us consider a path $\gamma$ in $\mathscr{P}(\mathbb{C}^n,\Omega_1)$ and an imaginary unit $I$ in $\mathbb{S}(\Omega_1,\gamma)$. By \eqref{eq-fgamma conj} and \eqref{eq-sigmaconj},
	\begin{equation*}
		\begin{split}
			F_{\Omega_1}^{f,c}(\gamma)
			&= {\conj}_{\mathbb{H}} \circ F_{\Omega_1}^{f}(\gamma) \\
			&= {\conj}_{\mathbb{H}} \left[\begin{pmatrix} 1\\ & -1 \end{pmatrix} F_{\Omega_1}^{f}(\overline{\gamma})\right] \\
			&= \begin{pmatrix} 1\\ & -1 \end{pmatrix} {\conj}_{\mathbb{H}} F_{\Omega_1}^{f}(\overline{\gamma}) \\
			&= \begin{pmatrix} 1\\  & -1 \end{pmatrix} F_{\Omega_1}^{f,c}(\overline{\gamma}).
		\end{split}
	\end{equation*}
	This implies 
	\begin{equation*}
		(c,Ic) F_{\Omega_1}^{f,c}(\gamma) = (c,Ic) \begin{pmatrix} 1\\ & -1 \end{pmatrix} F_{\Omega_1}^{f,c}(\gamma) = (c,-Ic) F_{\Omega_1}^{f,c}(\overline{\gamma}).
	\end{equation*}
	
(i) For any point $q$ not in $\mathbb{R}^n$, $\mathfrak{I}(q)$ is either $I$ or $-I$. Depending on $\mathfrak{I}(q)$ being $I$ or $-I$, we have $\gamma^{\mathfrak{I}(q)} = \gamma^I$ or $\overline{\gamma}^{\mathfrak{I}(q)} = \gamma^I$, respectively. 
	
	If  $ \mathfrak{I}(q) = I$,  then we have 
		\begin{equation*}
	 			\mathscr{F}_{\Omega_1}^{f,c}(q)
			={\conj}_{\mathbb{H}}\circ\mathscr{F}_{\Omega_1}^{f}(q)
			={\conj}_{\mathbb{H}}\circ F_{\Omega_1}^{f}(\gamma)
			=F_{\Omega_1}^{f,c}(\gamma) 
	\end{equation*}
	Otherwise we have   $ \mathfrak{I}(q) = -I$ so that
		\begin{equation*}
		 \mathscr{F}_{\Omega_1}^{f,c}(q)
			={\conj}_{\mathbb{H}}\circ\mathscr{F}_{\Omega_1}^{f}(q)
			={\conj}_{\mathbb{H}}\circ F_{\Omega_1}^{f}(\overline{\gamma})
			=F_{\Omega_1}^{f,c}(\overline{\gamma}).
	 	\end{equation*}
These results validate equation \eqref{eq-fc} in this particular setting.
	
(ii) In the case where $q$ is in $\mathbb{R}^n$, utilizing \eqref{eq-lfo1} and \eqref{eq-fgamma R}, we deduce:
	\begin{equation*}
		\begin{split}
			(c,Ic) F_{\Omega_1}^{f,c}(\gamma)
			&= (c,Ic) {\conj}_{\mathbb{H}} F_{\Omega_1}^{f}(\gamma) \\
			&= c {\conj}_{\mathbb{H}} f(\gamma(1)) \\
			&= c {\conj}_{\mathbb{H}} f(q) \\
			&= (c, \mathfrak{I}(q)c) {\conj}_{\mathbb{H}} \mathscr{F}_{\Omega_1}^{f}(q) \\
			&= \mathscr{F}_{\Omega_1}^{f,c}(q).
		\end{split}
	\end{equation*}
	Hence, equation \eqref{eq-fc} holds in this scenario as well.
\end{proof}

\begin{lem}\label{pr-F c holomorphic}
	Let $\Omega\subset\mathbb{H}_s^n$ and $F:\mathscr{P}(\mathbb{C}^n,\Omega)\rightarrow\mathbb{H}$ be a holomorphic function. Then $F^c$ is also holomorphic.
\end{lem}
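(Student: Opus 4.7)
The plan is to reduce the holomorphy of $F^c$ at an arbitrary $\gamma \in \mathscr{P}(\mathbb{C}^n, \Omega)$ to the holomorphy of $F$ at $\gamma$, by pushing the quaternionic conjugation $\conj_{\mathbb{H}}$ through the Cauchy--Riemann type operator appearing in Definition \ref{def-stem holo}. The radius condition $B_{\mathscr{P}(\mathbb{C}^n)}(\gamma,r) \subset \mathscr{P}(\mathbb{C}^n, \Omega)$ is inherited verbatim from the holomorphy of $F$, since $F^c$ has the same domain as $F$.

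First, I would observe that $F^c \circ \mathscr{L}_\gamma = \conj_{\mathbb{H}} \circ (F \circ \mathscr{L}_\gamma)$ by the very definition $F^c := \conj_{\mathbb{H}} \circ F$. Since $\conj_{\mathbb{H}}: \mathbb{H}^{2\times 1} \to \mathbb{H}^{2\times 1}$ is $\mathbb{R}$-linear and continuous, it commutes with each real partial derivative $\partial/\partial x_\ell$ and $\partial/\partial y_\ell$. Applied at a point $x + yi \in B_{\mathbb{C}}(\gamma(1),r)$ and for each $\ell \in \{1,\ldots,n\}$, this yields
\begin{equation*}
\frac{1}{2}\!\left(\frac{\partial}{\partial x_\ell} + \sigma \frac{\partial}{\partial y_\ell}\right)\!(F^c \circ \mathscr{L}_\gamma)(x+yi)
= \frac{1}{2}\,\conj_{\mathbb{H}}\!\frac{\partial (F \circ \mathscr{L}_\gamma)}{\partial x_\ell}(x+yi) + \frac{1}{2}\,\sigma\, \conj_{\mathbb{H}}\!\frac{\partial (F \circ \mathscr{L}_\gamma)}{\partial y_\ell}(x+yi).
\end{equation*}

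Next I would invoke equation \eqref{eq-sigmaconj} with $M = \sigma$, which is a real $2\times 2$ matrix, so that $\sigma \cdot \conj_{\mathbb{H}} = \conj_{\mathbb{H}} \circ \sigma$. This permits me to pull $\conj_{\mathbb{H}}$ outside of the entire operator, giving
\begin{equation*}
\frac{1}{2}\!\left(\frac{\partial}{\partial x_\ell} + \sigma \frac{\partial}{\partial y_\ell}\right)\!(F^c \circ \mathscr{L}_\gamma)(x+yi)
= \conj_{\mathbb{H}}\!\left[\frac{1}{2}\!\left(\frac{\partial}{\partial x_\ell} + \sigma \frac{\partial}{\partial y_\ell}\right)\!(F \circ \mathscr{L}_\gamma)(x+yi)\right].
\end{equation*}
Since $F$ is holomorphic at $\gamma$, the bracketed expression is $0$, and hence so is its conjugate, completing the proof. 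As $\gamma$ was arbitrary, $F^c$ is holomorphic on all of $\mathscr{P}(\mathbb{C}^n, \Omega)$.

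There is essentially no obstacle here: the entire argument rests on the $\mathbb{R}$-linearity of $\conj_{\mathbb{H}}$ (to commute it past real partials) and the previously established identity \eqref{eq-sigmaconj} (to commute it past the real matrix $\sigma$). The only mild subtlety is the observation that the radius $r$ witnessing holomorphy of $F$ at $\gamma$ works equally well for $F^c$, because the condition $B_{\mathscr{P}(\mathbb{C}^n)}(\gamma,r) \subset \mathscr{P}(\mathbb{C}^n, \Omega)$ depends only on the domain of definition, which is unchanged.
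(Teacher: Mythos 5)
Your proposal is correct and follows essentially the same route as the paper's proof: both take the radius $r$ witnessing holomorphy of $F$ at $\gamma$, and commute $\conj_{\mathbb{H}}$ past the operator $\tfrac{1}{2}\left(\tfrac{\partial}{\partial x_\ell}+\sigma\tfrac{\partial}{\partial y_\ell}\right)$ using its $\mathbb{R}$-linearity and the identity \eqref{eq-sigmaconj}. Your version merely makes explicit the two commutation steps (with the real partials and with $\sigma$) that the paper compresses into a single line.
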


\begin{proof}
	Let $\gamma\in\mathscr{P}(\mathbb{C}^n,\Omega)$. According to Definition \ref{def-stem holo}, there is $r>0$ such that $B_{\mathscr{P}(\mathbb{C}^n)}(\gamma,r)\subset\mathscr{P}(\mathbb{C}^n,\Omega)$ and
	\begin{equation*}
		\frac{1}{2}\left(\frac{\partial}{\partial x_\ell}+\sigma\frac{\partial}{\partial y_\ell}\right)\left(F\circ\mathscr{L}_{\gamma}\right)(x+yi)=0,
	\end{equation*}
	for each $x+yi\in B_{\mathbb{C}^n}(\gamma(1),r)$ and $\ell\in\{1,...,n\}$. It implies that
	\begin{equation*}
		\begin{split}
			&\frac{1}{2}\left(\frac{\partial}{\partial x_\ell}+\sigma\frac{\partial}{\partial y_\ell}\right)\left(F^c\circ\mathscr{L}_{\gamma}\right)(x+yi)
			\\=&\frac{1}{2}\left(\frac{\partial}{\partial x_\ell}+\sigma\frac{\partial}{\partial y_\ell}\right)\left({\conj}_\mathbb{H}\circ F\circ\mathscr{L}_{\gamma}\right)(x+yi)
			\\=&{\conj}_\mathbb{H}\circ\left[\frac{1}{2}\left(\frac{\partial}{\partial x_\ell}+\sigma\frac{\partial}{\partial y_\ell}\right)\right]\left(F\circ\mathscr{L}_{\gamma}\right)(x+yi)
			=0
		\end{split}
	\end{equation*}
	Therefore, $F^c$ is holomorphic.
\end{proof}

Let $I\in\mathbb{S}$. Then
\begin{equation}\label{eq-i sigma}
	I(1,I)=(I,-1)=(1,I)\begin{pmatrix} & -1\\1 \end{pmatrix}=(1,I)\sigma.
\end{equation}

We are now prepared to demonstrate how conjugation retains the holomorphic characteristics of functions within path spaces.

\begin{thm}\label{thm-f c slice regular}
	Let $\Omega_1\in\tau_s(\mathbb{H}_s^n)$ be real-path-connected, $\Omega_2\in\tau_s(\mathbb{H}_s^n)$ be $\Omega_1$-stem-preserving, and $f\in\mathcal{SR}(\Omega_2)$. Then $f_{\scriptscriptstyle{\Omega_1}}^c\in\mathcal{SR}(\Omega_1)$.
\end{thm}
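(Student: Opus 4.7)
The plan is to verify, for each $I \in \mathbb{S}$ and each $q_0 \in (\Omega_1)_I$, that the restriction $(f^c_{\scriptscriptstyle{\Omega_1}})_I$ is left-$I$-holomorphic in a neighborhood of $q_0$. The main idea is to exhibit $(f^c_{\scriptscriptstyle{\Omega_1}})_I$ locally as $(1, I)\,G$ for a function $G$ on an open subset of $\mathbb{C}^n$ that is holomorphic in the sense of Definition \ref{def-stem holo}, and then to convert its $\sigma$-CR system into the desired left-$I$-CR system using the identity \eqref{eq-i sigma}.

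First I would invoke Lemma \ref{pr-gamma exists} to produce $\gamma \in \mathscr{P}(\mathbb{C}^n, \Omega_1)$ with $\gamma^I \subset \Omega_1$ and $\gamma^I(1) = q_0$. Then, using slice-openness of $\Omega_1$ together with Lemma \ref{pr-r gamma Omega S >0} applied to $\mathbb{S}' = \{I\}$, I would fix $r > 0$ small enough that for every $z' \in B_{\mathbb{C}}(\gamma(1), r)$ the deformed path $\mathscr{L}_\gamma(z') = \gamma \circ \mathcal{L}_{\gamma(1)}^{z'}$ satisfies $I \in \mathbb{S}(\Omega_1, \mathscr{L}_\gamma(z'))$; in particular $\mathscr{L}_\gamma(z') \in \mathscr{P}(\mathbb{C}^n, \Omega_1)$. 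Applying \eqref{eq-fc} with $c = 1$ to this deformed path, and noting that its $I$-endpoint is $z'^I = \Psi_i^I(z')$, then yields the uniform local representation
\begin{equation*}
f^c_{\scriptscriptstyle{\Omega_1}}(z'^I) = (1, I) \bigl(F^{f,c}_{\Omega_1} \circ \mathscr{L}_\gamma\bigr)(z') =: (1, I)\, G(z')
\end{equation*}
for all $z' \in B_{\mathbb{C}}(\gamma(1), r)$. A convenient feature of \eqref{eq-fc} here is that the factor $(1, \mathfrak{I})$ in the definition of $f^c_{\scriptscriptstyle{\Omega_1}}$, which is discontinuous where $\mathfrak{I}$ flips sign, has already been absorbed into the constant row $(1, I)$.

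Next, by Proposition \ref{pr-FOmega1f holo} and Lemma \ref{pr-F c holomorphic}, $F^{f,c}_{\Omega_1}$ is holomorphic, so $G$ satisfies $\tfrac{1}{2}(\partial_{x_\ell} + \sigma \partial_{y_\ell}) G = 0$ on $B_{\mathbb{C}}(\gamma(1), r)$ for every $\ell$. Pulling the constant row $(1, I)$ through $\tfrac{1}{2}(\partial_{x_\ell} + I \partial_{y_\ell})$ and invoking \eqref{eq-i sigma} to replace $I(1, I)$ by $(1, I) \sigma$ produces
\begin{equation*}
\tfrac{1}{2} \bigl(\partial_{x_\ell} + I \partial_{y_\ell}\bigr) \bigl[(1, I)\, G\bigr] = (1, I) \cdot \tfrac{1}{2} \bigl(\partial_{x_\ell} + \sigma \partial_{y_\ell}\bigr) G = 0,
\end{equation*}
which is exactly the left-$I$-CR equation for $(f^c_{\scriptscriptstyle{\Omega_1}})_I$ at $q_0$. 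Real differentiability follows from the smoothness of $G$ that is implicit in the $\sigma$-CR system (the system is elliptic, since $\sigma^2 = -\mathbb{I}$). Because $I \in \mathbb{S}$ and $q_0 \in (\Omega_1)_I$ are arbitrary, this will give $f^c_{\scriptscriptstyle{\Omega_1}} \in \mathcal{SR}(\Omega_1)$.

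The main obstacle, and the step requiring the most care, is the uniform local setup in the second step: securing a single radius $r$ so that the whole family $\{\mathscr{L}_\gamma(z')\}_{z' \in B_{\mathbb{C}}(\gamma(1), r)}$ remains in $\mathscr{P}(\mathbb{C}^n, \Omega_1)$ with $I$ still admissible, so that \eqref{eq-fc} is promoted from a pointwise identity at the single point $q_0$ into a smooth local parametrization of $(f^c_{\scriptscriptstyle{\Omega_1}})_I$. Once this local parametrization is in hand, the rest of the argument reduces to the algebraic identities \eqref{eq-i sigma} and \eqref{eq-sigmaconj}, which together ensure both that conjugation preserves $\sigma$-holomorphy and that $\sigma$ can be transmuted into $I$ when acting on vectors of the form $(1, I)\, v$.
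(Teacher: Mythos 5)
Your proposal is correct and follows essentially the same route as the paper's own proof: Lemma \ref{pr-gamma exists} to select $\gamma$, Proposition \ref{pr-FOmega1f holo} and Lemma \ref{pr-F c holomorphic} to get holomorphy of $F_{\Omega_1}^{f,c}$, the $c=1$ case of \eqref{eq-fc} for the local representation $(1,I)\,(F_{\Omega_1}^{f,c}\circ\mathscr{L}_\gamma)$, and \eqref{eq-i sigma} to convert the $\sigma$-CR system into the left-$I$-CR system. Your extra care in securing a uniform radius via Lemma \ref{pr-r gamma Omega S >0} so that $I$ remains admissible for every deformed path is a slightly more explicit treatment of a step the paper handles implicitly through Definition \ref{def-stem holo}, but it is not a different argument.
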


\begin{proof}
	Let $z^I\in\Omega_1$. According to Lemma \ref{pr-gamma exists}, there is $\gamma\in\mathscr{P}(\mathbb{C}^n,\Omega)$ and $I\in\mathbb{S}(\Omega,\gamma)$ such that $\gamma^I(1)=z^I$. According to Proposition \ref{pr-FOmega1f holo}, $F_{\Omega_1}^{f}$ is holomorphic. By Lemma \ref{pr-F c holomorphic},
	\begin{equation*}
		F_{\Omega_1}^{f,c}=\left(F_{\Omega_1}^{f}\right)^c
	\end{equation*}
	is also holomorphic. According to Definition \ref{def-stem holo}, there is $r>0$ such that
	\begin{equation*}
		B_{\mathscr{P}(\mathbb{C}^n)}(\gamma,r)\subset\mathscr{P}(\mathbb{C}^n,\Omega_1)
	\end{equation*}
	and
	\begin{equation}\label{eq-holo}
		\frac{1}{2}\left(\frac{\partial}{\partial x_\ell}+\sigma\frac{\partial}{\partial y_\ell}\right)\left(F_{\Omega_1}^{f,c}\circ\mathscr{L}_{\gamma}\right)(x+yi)=0,
	\end{equation}
	for each $x+yi\in B_{\mathbb{C}^n}(\gamma(1),r)$ and $\ell\in\{1,...,n\}$. Taking $c=1$ in \eqref{eq-fc}, we have
	\begin{equation*}
		f_{\scriptscriptstyle{\Omega_1}}^c(x+yI)=(1,\mathfrak{I}(x+yI)) \mathscr{F}_{\Omega_1}^{f,c}(x+yI)=(1,I) F_{\Omega_1}^{f,c}\left(\gamma\circ\mathcal{L}_{\gamma(1)}^{x+yi}\right),
	\end{equation*}
	for each $x+yi\in B_{\mathbb{C}^n}(\gamma(1),r)$. According to \eqref{eq-i sigma} and \eqref{eq-holo},
	\begin{equation*}
		\begin{split}
			&\frac{1}{2}\left(\frac{\partial}{\partial x_\ell}+I\frac{\partial}{\partial y_\ell}\right) f_{\scriptscriptstyle{\Omega_1}}^c(x+yI)
			\\=&\frac{1}{2}\left(\frac{\partial}{\partial x_\ell}+I\frac{\partial}{\partial y_\ell}\right)(1,I) F_{\Omega_1}^{f,c}\left(\gamma\circ\mathcal{L}_{\gamma(1)}^{x+yi}\right)
			\\=&(1,I)\left[\frac{1}{2}\left(\frac{\partial}{\partial x_\ell}+\sigma\frac{\partial}{\partial y_\ell}\right)\right]\left(F_{\Omega_1}^{f,c}\circ\mathscr{L}_{\gamma}\right)(x+yi)=0,
		\end{split}
	\end{equation*}
	for each $x+yi\in B_{\mathbb{C}^n}(\gamma(1),r)$. It implies that $\left(f_{\scriptscriptstyle{\Omega_1}}^c\right)_I$ is holomorphic at $z^I$ for each $z\in\mathbb{C}$ with $z^I\in\Omega$. Therefore, $\left(f_{\scriptscriptstyle{\Omega_1}}^c\right)_I$ is holomorphic, for each $I\in\mathbb{S}$. Hence $f_{\scriptscriptstyle{\Omega_1}}^c$ is slice regular.
\end{proof}

We are also able to develop a path-slice stem function for the conjugate of a slice regular function.

\begin{lem}\label{pr-stem function of f Omega1 c}
	Consider $\Omega_1$ and $\Omega_2$ within the slice topology $\tau_s(\mathbb{H}_s^n)$, where $\Omega_1$ is real-path-connected and $\Omega_2$ is $\Omega_1$-stem-preserving. For a slice regular function $f$ defined on $\Omega_2$, the function $F_{\Omega_1}^{f,c}$ is a path-slice stem function for $f_{\scriptscriptstyle{\Omega_1}}^c$.
\end{lem}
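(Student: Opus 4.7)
The plan is to verify directly that $F_{\Omega_1}^{f,c}$ satisfies the defining identity of a path-slice stem function of $f_{\scriptscriptstyle{\Omega_1}}^c$ in the sense of Definition \ref{def-path-slice}. Since $f_{\scriptscriptstyle{\Omega_1}}^c$ is defined on $\Omega_1$, what must be shown is that for every $\gamma \in \mathscr{P}(\mathbb{C}^n, \Omega_1)$ and every $I \in \mathbb{S}(\Omega_1, \gamma)$,
\begin{equation*}
	f_{\scriptscriptstyle{\Omega_1}}^c \circ \gamma^I(1) = (1, I)\, F_{\Omega_1}^{f,c}(\gamma).
\end{equation*}
As a preliminary observation, $F_{\Omega_1}^{f,c} = \conj_{\mathbb{H}} \circ F_{\Omega_1}^{f}$ is well-defined on all of $\mathscr{P}(\mathbb{C}^n, \Omega_1)$ because $F_{\Omega_1}^{f}$ is, by \eqref{eq-F def}; so the target space and domain match the requirements of a stem function.

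For the main step, fix such a $\gamma$ and $I$, and put $q := \gamma^I(1) \in \Omega_1$. Apply equation \eqref{eq-fc} of the preceding lemma with the choice $c = 1$ to obtain
\begin{equation*}
	(1, I)\, F_{\Omega_1}^{f,c}(\gamma) = (1, \mathfrak{I}(q))\, \mathscr{F}_{\Omega_1}^{f,c}(q).
\end{equation*}
On the other hand, the definition \eqref{eq-f Omega1 c def} of the $\Omega_1$-slice conjugation reads $f_{\scriptscriptstyle{\Omega_1}}^c = (1, \mathfrak{I})\, \mathscr{F}_{\Omega_1}^{f,c}$, so evaluation at $q$ gives
\begin{equation*}
	f_{\scriptscriptstyle{\Omega_1}}^c(q) = (1, \mathfrak{I}(q))\, \mathscr{F}_{\Omega_1}^{f,c}(q).
\end{equation*}
Comparing these two identities yields $f_{\scriptscriptstyle{\Omega_1}}^c \circ \gamma^I(1) = (1, I)\, F_{\Omega_1}^{f,c}(\gamma)$, which is exactly the stem-function identity \eqref{eq-fcg}.

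There is no real obstacle here: once the preceding lemma \eqref{eq-fc} is in hand, the claim is a one-line substitution combined with the definition of $f_{\scriptscriptstyle{\Omega_1}}^c$. It is worth noting that the argument does not actually use the slice regularity of $f$; it works equally well for any path-slice $f$ on $\Omega_2$. Slice regularity is present in the hypothesis because the companion result (Proposition \ref{pr-FOmega1f holo}) is what makes $F_{\Omega_1}^{f,c}$ holomorphic, which is the feature that will be invoked subsequently; the bare stem-function property, however, follows from the path-slice structure alone.
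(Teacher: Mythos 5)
Your proposal is correct and follows essentially the same route as the paper's own proof: fix $\gamma$ and $I\in\mathbb{S}(\Omega_1,\gamma)$, set $q=\gamma^I(1)$, apply \eqref{eq-fc} with $c=1$ together with the definition \eqref{eq-f Omega1 c def} of $f_{\scriptscriptstyle{\Omega_1}}^c$, and conclude via Definition \ref{def-path-slice}. Your added observation that the argument only uses path-sliceness of $f$, not slice regularity, is accurate and a reasonable aside.
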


 \begin{proof}
 	For any path $\gamma$ within $\mathscr{P}(\mathbb{C}^n, \Omega_1)$ and any imaginary unit $I$ from $\mathbb{S}(\Omega_1, \gamma)$, let $q$ denote the endpoint of the path $\gamma^I$, that is, $q = \gamma^I(1)$. Utilizing equation \eqref{eq-fc} and the definition of $\Omega_1$-slice conjugation as stated in 
 	\eqref{eq-lo1},  we find that
 	\begin{equation*}
 		(1, I)F_{\Omega_1}^{f, c}(\gamma) = (1, \mathfrak{I}(q))\mathscr{F}_{\Omega_1}^{f, c}(q) = f_{\scriptscriptstyle{\Omega_1}}^c(q).
 	\end{equation*}
 	Given this relationship and based on the definition of a path-slice function (Definition \ref{def-path-slice}), we can conclude that $F_{\Omega_1}^{f, c}$ acts as a path-slice stem function for the function $f_{\scriptscriptstyle{\Omega_1}}^c$.
 \end{proof}

\section{Symmetrization of path-slice functions}

This section focuses on the innovative concept of extending symmetrization to path-slice functions. Our exploration reveals that symmetrization of a path-slice function not only retains its intrinsic properties but also ensures that it is slice-preserving. This property is crucial, as it maintains the function's integrity across different slices of the quaternionic space. We delve into the intricate process of symmetrizing path-slice functions and demonstrate the implications of this process, particularly in preserving the slice nature of the functions. This advancement in quaternionic analysis opens new avenues for exploring the symmetrical aspects of path-slice functions.

The introduction of symmetrization in the context of path-slice functions represents a significant step forward in our understanding of the structural and functional dynamics of these functions. It not only maintains the essential characteristics of the original functions but also ensures their adaptability across various slices of quaternionic domains. This development is expected to contribute substantially to the field of slice quaternionic analysis, offering novel insights and methodologies for future research.

\begin{definition}
	For subsets $\Omega_1$ and $\Omega_2$ of $\mathbb{H}s^n$, where $\Omega_1$ is real-path-connected and $\Omega_2$ is $\Omega_1$-stem-preserving, let $f$ be a path-slice function defined on $\Omega_2$. The function $$f^s_{\scriptscriptstyle{\Omega_1}} := f^c_{\scriptscriptstyle{\Omega_1}} * f$$ is termed the \textit{\textbf{symmetrization}} of $f$ on $\Omega_1$.
\end{definition}

\begin{definition}
	For a subset $\Omega$ of $\mathbb{H}_s^n$, a function $f:\Omega\rightarrow\mathbb{H}$ is defined as \textit{\textbf{slice-preserving}} if, for every imaginary unit $I$ in $\mathbb{S}$, the restriction of $f$ to the slice $\Omega_I$, denoted as $f_I$, maps $\Omega_I$ into the complex plane $\mathbb{C}_I$.  
\end{definition}

\begin{theorem}\label{thm:sym-309}
	Given subsets $\Omega_1$ and $\Omega_2$ of the quaternion space $\mathbb{H}_s^n$, assume $\Omega_1$ is real-path-connected and $\Omega_2$ is $\Omega_1$-stem-preserving. Let $f: \Omega_2 \rightarrow \mathbb{H}$ be a path-slice function.  
Then the symmetrization $f^s_{\Omega_1}$ is slice-preserving. Furthermore, the function 
	\[ F_{\Omega_1}^{f,s} := F_{\Omega_1}^{f,c} * F_{\Omega_1}^{f} \]
	acts as a path-slice stem function for $f^s_{\Omega_1}$. It can be expressed explicitly as 
	\begin{equation}\label{eq-F s real}
	F_{\Omega_1}^{f,s}:=\begin{pmatrix}
		F_{\Omega_1}^{f,s,1}\\ \\ F_{\Omega_1}^{f,s,2}
	\end{pmatrix}:=\begin{pmatrix} \overline{F_{\Omega_1}^{f,1}}F_{\Omega_1}^{f,1}-\overline{F_{\Omega_1}^{f,2}}F_{\Omega_1}^{f,2}
		\\ \\ \overline{F_{\Omega_1}^{f,2}}F_{\Omega_1}^{f,1}+\overline{\overline{F_{\Omega_1}^{f,2}}F_{\Omega_1}^{f,1}}
	\end{pmatrix}\in\mathbb{R}^{2\times 1},
\end{equation}
	where $F_{\Omega_1}^{f}$ is defined as 
	\[ F_{\Omega_1}^{f} = \begin{pmatrix} F_{\Omega_1}^{f,1} \\ \\ F_{\Omega_1}^{f,2} \end{pmatrix}. \]
\end{theorem}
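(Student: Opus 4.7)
The plan is to deduce the theorem in three steps, each one leveraging material already assembled in the preliminaries and Section 3. First, I would establish the stem-function claim by direct invocation of what we have: by Lemma \ref{pr-stem function of f Omega1 c}, $F_{\Omega_1}^{f,c}$ is a path-slice stem function for $f^c_{\Omega_1} \in \mathcal{PS}(\Omega_1)$; since $\Omega_2$ is $\Omega_1$-stem-preserving by hypothesis, Proposition \ref{pr-1} applied with $f^c_{\Omega_1}$ in the role of the first factor and $f$ in the role of the second yields that $F_{\Omega_1}^{f,c} * F_{\Omega_1}^{f}$ is a path-slice stem function of $f^c_{\Omega_1} * f = f^s_{\Omega_1}$.

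Second, I would derive the explicit formula \eqref{eq-F s real}. Using the pointwise definition \eqref{eq-star product pointwise} with $p = (\overline{F_{\Omega_1}^{f,1}}, \overline{F_{\Omega_1}^{f,2}})^T$ and $q = (F_{\Omega_1}^{f,1}, F_{\Omega_1}^{f,2})^T$, the matrices $p_1 \mathbb{I} + p_2 \sigma$ and $q_1 \mathbb{I} + q_2 \sigma$ each take the familiar $\begin{pmatrix} a & -b \\ b & a \end{pmatrix}$ shape, and multiplying them and applying the result to $e_1$ produces the two entries in \eqref{eq-F s real} up to a rearrangement. For the second coordinate one writes $\overline{F_{\Omega_1}^{f,1}}F_{\Omega_1}^{f,2} = \overline{\overline{F_{\Omega_1}^{f,2}}F_{\Omega_1}^{f,1}}$ using the order-reversing identity for quaternionic conjugation, which matches the stated expression exactly.

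Third, from the formula it is transparent that both entries are real: the first is $|F_{\Omega_1}^{f,1}|^2 - |F_{\Omega_1}^{f,2}|^2$, and the second has the form $a + \overline{a} = 2\operatorname{Re}(a)$ with $a = \overline{F_{\Omega_1}^{f,2}} F_{\Omega_1}^{f,1}$. Hence $F_{\Omega_1}^{f,s}(\gamma) \in \mathbb{R}^{2\times 1}$ for every $\gamma \in \mathscr{P}(\mathbb{C}^n,\Omega_1)$. To conclude that $f^s_{\Omega_1}$ is slice-preserving, fix $I \in \mathbb{S}$ and $q \in (\Omega_1)_I$. By real-path-connectedness and Lemma \ref{pr-gamma exists} there exists $\gamma \in \mathscr{P}(\mathbb{C}^n,\Omega_1)$ with $I \in \mathbb{S}(\Omega_1,\gamma)$ and $\gamma^I(1) = q$; then the defining relation \eqref{eq-fcg} for the stem function $F_{\Omega_1}^{f,s}$ gives $f^s_{\Omega_1}(q) = (1,I)F_{\Omega_1}^{f,s}(\gamma) \in \mathbb{C}_I$, since the two entries are real.

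No step is conceptually difficult, because Lemma \ref{pr-stem function of f Omega1 c} and Proposition \ref{pr-1} already assemble the stem-function picture; the only place that needs care is the bookkeeping in the explicit $*$-product calculation, where the noncommutativity of $\mathbb{H}$ requires me to respect the left-versus-right placement of the factors in both the matrix product and the identity $\overline{\overline{F_{\Omega_1}^{f,2}}F_{\Omega_1}^{f,1}} = \overline{F_{\Omega_1}^{f,1}}F_{\Omega_1}^{f,2}$.
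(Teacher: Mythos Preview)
Your proposal is correct and follows essentially the same approach as the paper: both invoke Lemma~\ref{pr-stem function of f Omega1 c} and Proposition~\ref{pr-1} for the stem-function claim, carry out the same direct computation of the pointwise $*$-product to obtain \eqref{eq-F s real}, and then use Lemma~\ref{pr-gamma exists} together with the real-valuedness of the entries to conclude slice-preservation. The only difference is the order of presentation (you establish the stem-function property before the explicit formula, whereas the paper does the reverse), which is immaterial.
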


\begin{proof}
	First, consider the pointwise $*$-product as defined in equations \eqref{eq-star product pointwise} and \eqref{eq-star product functions}. We apply these definitions to find the path-slice stem function of $f^s_{\Omega_1}$:
\begin{equation*}\label{eq-Fc F in R 2 times 1}
	\begin{split}
		F_{\Omega_1}^{f,s}
		=&F_{\Omega_1}^{f,c}*F_{\Omega_1}^{f}
		\\=&\left(F_{\Omega_1}^{f,c,1}\cdot\mathbb{I}+F_{\Omega_1}^{f,c,2}\cdot\sigma\right)\left(F_{\Omega_1}^{f,1}\cdot\mathbb{I}+F_{\Omega_1}^{f,2}\cdot\sigma\right)e_1,
		\\=&\left(\overline{F_{\Omega_1}^{f,1}}\cdot\mathbb{I}+\overline{F_{\Omega_1}^{f,2}}\cdot\sigma\right)\left(F_{\Omega_1}^{f,1}\cdot\mathbb{I}+F_{\Omega_1}^{f,2}\cdot\sigma\right)e_1
		\\=&\left[\left(\overline{F_{\Omega_1}^{f,1}}F_{\Omega_1}^{f,1}-\overline{F_{\Omega_1}^{f,2}}F_{\Omega_1}^{f,2}\right)\cdot\mathbb{I}+\left(\overline{F_{\Omega_1}^{f,2}}F_{\Omega_1}^{f,1}+\overline{F_{\Omega_1}^{f,1}}F_{\Omega_1}^{f,2}\right)\cdot\sigma\right]e_1
		 	\\=&\begin{pmatrix} \overline{F_{\Omega_1}^{f,1}}F_{\Omega_1}^{f,1}-\overline{F_{\Omega_1}^{f,2}}F_{\Omega_1}^{f,2}
			\\ \\ \overline{F_{\Omega_1}^{f,2}}F_{\Omega_1}^{f,1}+\overline{\overline{F_{\Omega_1}^{f,2}}F_{\Omega_1}^{f,1}}
		\end{pmatrix}.
	\end{split}
\end{equation*}
	This verifies equation \eqref{eq-F s real}. Furthermore, $F_{\Omega_1}^{f,c} * F_{\Omega_1}^{f}$ is a path-slice stem function of $f^s_{\Omega_1}$, as indicated by Lemma  \ref{pr-stem function of f Omega1 c} and Proposition \ref{pr-1}.
	
	Next, for any $I \in \mathbb{S}$ and $z^I \in \Omega_I$, Lemma \ref{pr-gamma exists} ensures the existence of a path $\gamma \in \mathscr{P}(\mathbb{C}^n, \Omega_1)$ with $\gamma^I(1) = z^I$. Applying the relation in equation \eqref{eq-fcg} to our scenario:
	\begin{align*}
		f^s_{\Omega_1}(z^I) &= f^s_{\Omega_1} \circ \gamma^I(1) = (1, I) \left( F_{\Omega_1}^{f,c} * F_{\Omega_1}^{f} \right)(\gamma) \in \mathbb{C}_I.
	\end{align*}
	This implies that $f^s_{\Omega_1}$ maps $\Omega_I$ into $\mathbb{C}_I$. Since $I \in \mathbb{S}$ was chosen arbitrarily, $f^s_{\Omega_1}$ is confirmed to be slice-preserving.
\end{proof}

\section{Zeros of path-slice functions}

This section is dedicated to exploring the zero sets of both path-slice and slice regular functions. A key discovery presented here is the path-slice analytic nature of the zero set of a slice regular function. This significant aspect is comprehensively elaborated in Theorem \ref{thm-analytic}, highlighting the intricate relationship between the zeros of these functions and their underlying analytical properties.

\begin{theorem} (Representation formula)
	Consider a subset $\Omega$ within $\mathbb{H}_s^n$ and let $f$ be a function from the class $\mathcal{PS}(\Omega)$. The representation of $f$ when composed with $\gamma^I$ can be expressed as
	\begin{equation}\label{eq-representation formula}
		f\circ\gamma^I = (1,I)\begin{pmatrix} 1&J\\ 1&K \end{pmatrix}^{-1} \begin{pmatrix} f\circ\gamma^J\\ f\circ\gamma^K \end{pmatrix},
	\end{equation}
	where this holds true for every path $\gamma$ in the set $\mathscr{P}(\mathbb{C}^n,\Omega)$ and for every $I, J, K$ in the set $\mathbb{S}(\Omega,\gamma)$, given that $J$ and $K$ are distinct.
\end{theorem}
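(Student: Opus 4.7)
The plan is to reduce the identity of paths to an identity of points at each $t\in[0,1]$, and then invoke the existing representation formula \eqref{eq-fgbp} applied to a suitable reparameterized sub-path.

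First, for each fixed $t\in[0,1]$, I would introduce the truncated path $\gamma_t\in\mathscr{P}(\mathbb{C}^n)$ defined by $\gamma_t(s):=\gamma(st)$. Clearly $\gamma_t(0)=\gamma(0)\in\mathbb{R}^n$, so $\gamma_t$ is an admissible path. Moreover, since $\gamma_t^L$ is just a reparameterization of the initial segment of $\gamma^L$, one has $\gamma_t^L\subset\gamma^L\subset\Omega$ for each $L\in\mathbb{S}(\Omega,\gamma)$, so that
\[
\{I,J,K\}\subset\mathbb{S}(\Omega,\gamma)\subset\mathbb{S}(\Omega,\gamma_t).
\]
Also $\gamma_t(1)=\gamma(t)$, hence $\gamma_t^L(1)=\gamma^L(t)$ for every $L\in\mathbb{S}(\Omega,\gamma_t)$.

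Next, let $F\in\mathcal{PSS}(f)$ be any path-slice stem function of $f$. Applying the representation \eqref{eq-fgbp} to $\gamma_t$ with the pair $(J,K)$ (which is permitted since $J\neq K$), I obtain
\[
F(\gamma_t)=\begin{pmatrix}1 & J\\ 1 & K\end{pmatrix}^{-1}\begin{pmatrix}f\circ\gamma_t^{J}(1)\\ f\circ\gamma_t^{K}(1)\end{pmatrix}=\begin{pmatrix}1 & J\\ 1 & K\end{pmatrix}^{-1}\begin{pmatrix}f\circ\gamma^{J}(t)\\ f\circ\gamma^{K}(t)\end{pmatrix}.
\]
On the other hand, the defining property \eqref{eq-fcg} of a path-slice stem function, applied to $\gamma_t$ and $I$, gives
\[
f\circ\gamma^{I}(t)=f\circ\gamma_t^{I}(1)=(1,I)F(\gamma_t).
\]
Combining the two displays and letting $t$ vary over $[0,1]$ yields \eqref{eq-representation formula}.

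The only substantive checks are (i) that $\gamma_t$ genuinely lies in $\mathscr{P}(\mathbb{C}^n,\Omega)$ and inherits all three units $I,J,K$ in $\mathbb{S}(\Omega,\gamma_t)$, and (ii) that the invertibility of $\begin{pmatrix}1 & J\\ 1 & K\end{pmatrix}$ is not an issue, which follows from $J\neq K$ together with the standard quaternionic $2\times 2$ inversion. I do not anticipate any real obstacle: once the sub-path construction is in place, the result is a direct consequence of \eqref{eq-fgbp} and the definition of a path-slice function.
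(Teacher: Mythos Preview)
Your proposal is correct and follows essentially the same route as the paper: both introduce the truncated path $s\mapsto\gamma(st)$, verify that $I,J,K$ remain in its admissible set, apply \eqref{eq-fgbp} with $J,K$ to compute the stem value, and then use \eqref{eq-fcg} with $I$ to recover $f\circ\gamma^I(t)$. The only differences are cosmetic (the paper calls the truncated path $\alpha$ rather than $\gamma_t$).
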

 
\begin{proof}
	For any given $t \in [0,1]$, we define a new path $\alpha : [0,1] \to \mathbb{C}^n$ by $$\alpha(s) = \gamma(ts)$$ for each $s \in [0,1]$. The path $\alpha^L$ for any $L \in {I, J, K}$ is contained within $\gamma^L \subset \Omega$, indicating $\alpha \in \mathscr{P}(\mathbb{C}^n, \Omega)$ and $I, J, K \in \mathbb{S}(\Omega, \alpha)$.
	
	By \eqref{eq-fgbp}, the stem function $F$ of $f$ satisfies
	\begin{equation*}
		F(\alpha) = \begin{pmatrix} 1 & J \\ 1 & K \end{pmatrix}^{-1} \begin{pmatrix} f(\alpha^J(1)) \\ f(\alpha^K(1)) \end{pmatrix}.
	\end{equation*}
	
	Since $\alpha(1) = \gamma(t)$, by applying the definition of path-slice functions in \eqref{eq-fcg}, we find
	\begin{equation*}
	\begin{split}
		f\circ\gamma^I(t)
		=&f\circ\alpha^I(1)
		=(1,I)F(\alpha)
		\\=&(1,I)\begin{pmatrix} 1&J\\1& K \end{pmatrix}^{-1}\begin{pmatrix} f\circ\alpha^J(1)\\f\circ\alpha^K(1) \end{pmatrix}
		\\=&(1,I)\begin{pmatrix} 1&J\\1& K \end{pmatrix}^{-1}\begin{pmatrix} f\circ\gamma^J(t)\\f\circ\gamma^K(t) \end{pmatrix}.
	\end{split}
\end{equation*}

	This relationship is valid for any $t \in [0,1]$, thus proving the representation formula for the path-slice function $f$.
\end{proof}

 To analyze the zeros of path-slice functions, the following lemmas are required.
 
 For any  subset $\Omega$ of $\mathbb{H}_s^n$ and  a  path $\gamma \in \mathscr{P}(\mathbb{C}^n, \Omega)$, we denote  
 \begin{equation*}
 	 \gamma^{\mathbb{S}(\Omega, \gamma)}(1) := \{ \gamma^I(1) : I \in \mathbb{S}(\Omega, \gamma) \}.
 \end{equation*}

  \begin{lem}
  	Consider a subset $\Omega$ of $\mathbb{H}_s^n$ and a path-slice function $f$ defined on $\Omega$. For a given path $\gamma \in \mathscr{P}(\mathbb{C}^n, \Omega)$ and distinct quaternionic units $J, K \in \mathbb{S}(\Omega, \gamma)$, suppose that both $\gamma^J(1)$ and $\gamma^K(1)$ belong to the zero set of $f$, denoted as $\mathcal{Z}(f) = \{ q \in \Omega : f(q) = 0 \}$. Then, the following inclusion holds:
  	\begin{equation}
  		\mathcal{Z}(f) \supset \gamma^{\mathbb{S}(\Omega, \gamma)}(1).
  	\end{equation}
  \end{lem}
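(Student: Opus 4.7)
The plan is to apply the representation formula (equation~\eqref{eq-representation formula}) proved immediately beforehand, evaluated at the single point $t=1$, to each $I \in \mathbb{S}(\Omega,\gamma)$. Because $J \neq K$, the $2\times 2$ matrix $\begin{pmatrix} 1 & J \\ 1 & K \end{pmatrix}$ is invertible (its ``complex-like'' structure with $J-K \neq 0$ guarantees this), so the representation formula is applicable to the triple $(I,J,K)$ with $I$ arbitrary in $\mathbb{S}(\Omega,\gamma)$.

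First, I would fix an arbitrary $I \in \mathbb{S}(\Omega,\gamma)$ and note that $\gamma^I(1) \in \Omega$ since $\gamma^I \subset \Omega$ by the definition of $\mathbb{S}(\Omega,\gamma)$, so that the expression $f(\gamma^I(1))$ makes sense. Then, setting $t=1$ in \eqref{eq-representation formula},
\[
f\circ\gamma^{I}(1) \;=\; (1,I)\begin{pmatrix} 1 & J \\ 1 & K \end{pmatrix}^{-1}\begin{pmatrix} f\circ\gamma^{J}(1) \\ f\circ\gamma^{K}(1) \end{pmatrix}.
\]
By hypothesis, $\gamma^J(1),\gamma^K(1) \in \mathcal{Z}(f)$, so both entries of the column vector on the right vanish. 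Consequently $f\circ\gamma^{I}(1)=0$, i.e.\ $\gamma^{I}(1) \in \mathcal{Z}(f)$.

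Since $I \in \mathbb{S}(\Omega,\gamma)$ was arbitrary, this gives $\gamma^{\mathbb{S}(\Omega,\gamma)}(1) \subset \mathcal{Z}(f)$, as desired. There is no real obstacle here; the content of the lemma is essentially a repackaging of the representation formula at the endpoint $t=1$, and the only bookkeeping is to confirm that the endpoint $\gamma^{I}(1)$ lies in $\Omega$ (which is automatic from $I \in \mathbb{S}(\Omega,\gamma)$) and that the coefficient matrix is invertible (which holds because $J \neq K$).
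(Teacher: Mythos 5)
Your proof is correct and follows exactly the same route as the paper: both apply the representation formula at the endpoint $t=1$ with the distinct units $J,K$ and conclude that the vanishing of $f\circ\gamma^J(1)$ and $f\circ\gamma^K(1)$ forces $f\circ\gamma^I(1)=0$ for every $I\in\mathbb{S}(\Omega,\gamma)$. No gaps; the extra remarks on invertibility of the coefficient matrix and on $\gamma^I(1)\in\Omega$ are harmless bookkeeping that the paper leaves implicit.
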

  
  \begin{proof}
  	Since $\gamma^J(1)$ and $\gamma^K(1)$ are in $\mathcal{Z}(f)$, we have $f(\gamma^J(1)) = 0$ and $f(\gamma^K(1)) = 0$. By the definition of path-slice functions and the representation formula, for any $I \in \mathbb{S}(\Omega, \gamma)$, we have
  	\begin{equation*}
  		f(\gamma^I(1)) = (1, I) \begin{pmatrix} 1 & J \\ 1 & K \end{pmatrix}^{-1} \begin{pmatrix} f(\gamma^J(1)) \\ f(\gamma^K(1)) \end{pmatrix}.
  	\end{equation*}
  	Since both $f(\gamma^J(1))$ and $f(\gamma^K(1))$ are zero, the right-hand side of the equation evaluates to zero. Therefore, $f(\gamma^I(1)) = 0$ for all $I \in \mathbb{S}(\Omega, \gamma)$, implying that $\gamma^I(1) \in \mathcal{Z}(f)$. This completes the proof. 
  \end{proof}

\begin{lem}
	Let $\Omega\subset\mathbb{H}_s^n$ be self-stem-preserving, and $f\in\mathcal{PS}(\Omega)$. Then
	\begin{equation}\label{eq-zero}
		\mathcal{Z}(f)\subset\mathcal{Z}\left(f^s_{\scriptscriptstyle{\Omega_1}}\right).
	\end{equation}
\end{lem}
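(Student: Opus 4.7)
The lemma is to be read with $\Omega_1$ in the statement taken to be $\Omega$ itself; this is legitimate because $\Omega$ being self-stem-preserving means it can simultaneously play the role of the real-path-connected set and of the stem-preserving set in the definitions of $f^c_\Omega$, $\mathscr{F}^f_\Omega$, and the $*$-product. Fix $q \in \mathcal{Z}(f)$; the plan is to show $f^s_\Omega(q) = 0$ by a short case split on whether $q \in \mathbb{R}^n$.

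If $q \in \Omega \cap \mathbb{R}^n$, then $\mathfrak{I}(q) = 0$ and equation \eqref{eq-lfo1} gives $\mathscr{F}^f_\Omega(q) = (f(q),\,0)^T = (0,\,0)^T$; the $*$-product formula \eqref{eq-starproduct def} then produces $f^s_\Omega(q) = (f^c_\Omega(q),\,0)\,(0,\,0)^T = 0$ with no further work.

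For $q \notin \mathbb{R}^n$, Lemma \ref{pr-gamma exists} furnishes a path $\gamma \in \mathscr{P}(\mathbb{C}^n,\Omega)$ with $\mathfrak{I}(q) \in \mathbb{S}(\Omega,\gamma)$ and $\gamma^{\mathfrak{I}(q)}(1) = q$. Theorem \ref{thm:sym-309} tells us that $F^{f,s}_\Omega$ is a path-slice stem function of $f^s_\Omega$, so that $f^s_\Omega(q) = (1,\mathfrak{I}(q))\, F^{f,s}_\Omega(\gamma)$, and it supplies the explicit expression \eqref{eq-F s real} for the two components of $F^{f,s}_\Omega(\gamma)$ in terms of $F^{f,1}_\Omega(\gamma)$ and $F^{f,2}_\Omega(\gamma)$. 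The hypothesis $f(q) = 0$ unpacks via \eqref{eq-fcg} into the single linear relation $F^{f,1}_\Omega(\gamma) = -\mathfrak{I}(q)\,F^{f,2}_\Omega(\gamma)$. Substituting this into \eqref{eq-F s real} and using $\overline{\mathfrak{I}(q)} = -\mathfrak{I}(q)$ together with $\mathfrak{I}(q)^2 = -1$ should make both entries of $F^{f,s}_\Omega(\gamma)$ collapse to zero, after which $f^s_\Omega(q) = 0$ follows immediately.

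The only subtlety is the non-commutative quaternionic bookkeeping at the very end: setting $a := F^{f,1}_\Omega(\gamma)$ and $b := F^{f,2}_\Omega(\gamma)$ with $a = -\mathfrak{I}(q)\,b$, the first component $\overline{a}a - \overline{b}b$ vanishes because $|{-}\mathfrak{I}(q)\,b|^2 = |b|^2$, while the second component $\overline{b}a + \overline{a}b$ reduces to $-\overline{b}\,\mathfrak{I}(q)\,b + \overline{b}\,\mathfrak{I}(q)\,b = 0$. Both identities rest only on $\mathfrak{I}(q)^2 = -1$ and associativity of $\mathbb{H}$, so no genuine obstacle is expected beyond carefully assembling Theorem \ref{thm:sym-309} with Lemma \ref{pr-gamma exists}.
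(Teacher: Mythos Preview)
Your argument is correct and follows the same core idea as the paper: from $f(q)=0$ one extracts the relation $F_\Omega^{f,1}(\gamma)=-I\,F_\Omega^{f,2}(\gamma)$ and then verifies that both components of $F_\Omega^{f,s}(\gamma)$ in \eqref{eq-F s real} vanish, whence $f^s_\Omega(q)=(1,I)F_\Omega^{f,s}(\gamma)=0$. The only difference is organizational: the paper invokes real-path-connectedness directly (Definition~\ref{def-osmh}) to obtain \emph{some} $\gamma$ and $I\in\mathbb{S}(\Omega,\gamma)$ with $\gamma^I(1)=q$, which works uniformly for real and non-real $q$; you instead insist on $I=\mathfrak{I}(q)$ via Lemma~\ref{pr-gamma exists}, which forces a separate treatment of $q\in\mathbb{R}^n$. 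That case split is harmless but unnecessary---once you realize any $I\in\mathbb{S}(\Omega,\gamma)$ will do, the argument collapses to the paper's single computation.
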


\begin{proof}
	Let $q \in \mathcal{Z}(f)$, and let $F$ be a path-slice stem function of $f$. Given that $\Omega$ is self-stem-preserving, it follows that $\Omega$ is also real-path-connected. Consequently, there exists a path $\gamma \in \mathscr{P}(\mathbb{C}^n, \Omega)$ and a quaternionic unit $I \in \mathbb{S}(\Omega, \gamma)$ such that $\gamma^I(1) = q$. From the definition of path-slice functions and equation \eqref{eq-fcg}, we have:
	\begin{equation*}
		0 = f(q) = f(\gamma^I(1)) = (1, I)F(\gamma) = (1, I)F_{\Omega}^{f}(\gamma) = F_{\Omega}^{f,1}(\gamma) + I F_{\Omega}^{f,2}(\gamma).
	\end{equation*}
	This implies 
	\begin{equation}\label{eq-F1}
		\overline{F_{\Omega}^{f,1}(\gamma)}F_{\Omega}^{f,1}(\gamma) = \overline{F_{\Omega}^{f,2}(\gamma)}F_{\Omega}^{f,2}(\gamma),
	\end{equation}
	and
	\begin{equation}\label{eq-F2}
	\begin{split}
		\overline{F_{\Omega}^{f,2}(\gamma)}F_{\Omega}^{f,1}(\gamma)
		=&\overline{ F_{\Omega}^{f,2}(\gamma)}\left[-I\cdot F_{\Omega}^{f,2}(\gamma)\right]
				\\=&-\left[\overline{-IF_{\Omega}^{f,2}(\gamma)}\right]\cdot F_{\Omega}^{f,2}(\gamma)
				\\
		=&-\overline{F_{\Omega}^{f,1}(\gamma)}F_{\Omega}^{f,2}(\gamma)
		\\=&-\overline{\overline{F_{\Omega}^{f,2}(\gamma)}F_{\Omega}^{f,1}(\gamma)}.
	\end{split}
\end{equation}
	Therefore, equation \eqref{eq-zero} is satisfied due to \eqref{eq-F1}, \eqref{eq-F2}, and the definition of $f^s_{\scriptscriptstyle{\Omega}}$ as the symmetrization of $f$ on $\Omega$.
\end{proof}

\begin{lem}\label{pr-zero circle}
	Given subsets $\Omega_1$ and $\Omega_2$ of $\mathbb{H}_s^n$, where $\Omega_1$ is real-path-connected and $\Omega_2$ is $\Omega_1$-stem-preserving, consider a path-slice function $f$ defined on $\Omega_2$. Suppose $\gamma$ is a path in $\mathscr{P}(\mathbb{C}^n, \Omega_1)$, and $I$ is a quaternionic unit in $\mathbb{S}(\Omega_1, \gamma)$ such that the endpoint $\gamma^I(1)$ lies in the zero set   $\mathcal{Z}\left(f^s_{\scriptscriptstyle{\Omega_1}}\right)$.   Then we have 
	$$\gamma^{\mathbb{S}(\Omega_1,\gamma)}(1)\subset\mathcal{Z}\left(f^s_{\scriptscriptstyle{\Omega_1}}\right).$$
\end{lem}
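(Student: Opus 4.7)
The plan is to exploit the crucial fact, established in Theorem \ref{thm:sym-309}, that the path-slice stem function $F_{\Omega_1}^{f,s}$ of the symmetrization $f^s_{\Omega_1}$ takes values in $\mathbb{R}^{2\times 1}$. This real-valuedness is the engine that makes the vanishing of $f^s_{\Omega_1}$ at one endpoint $\gamma^I(1)$ spread along the entire fiber $\gamma^{\mathbb{S}(\Omega_1,\gamma)}(1)$, in complete analogy with the classical fact that a complex polynomial with real coefficients that vanishes at $z$ also vanishes at $\bar z$.

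First, I would record that, by Theorem \ref{thm:sym-309}, $F_{\Omega_1}^{f,s}$ is a path-slice stem function of $f^s_{\Omega_1}$ on $\Omega_1$ and that its two components $F_{\Omega_1}^{f,s,1}(\gamma),F_{\Omega_1}^{f,s,2}(\gamma)$ are real. Then I would apply the defining identity \eqref{eq-fcg} of a path-slice function to the hypothesis $f^s_{\Omega_1}(\gamma^I(1))=0$, obtaining
\begin{equation*}
0 \;=\; f^s_{\Omega_1}\circ\gamma^I(1) \;=\; (1,I)\,F_{\Omega_1}^{f,s}(\gamma) \;=\; F_{\Omega_1}^{f,s,1}(\gamma) + I\,F_{\Omega_1}^{f,s,2}(\gamma).
\end{equation*}

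Next, since $I\in\mathbb{S}$ is linearly independent from $1$ over $\mathbb{R}$, and both coefficients are real, the above equation forces $F_{\Omega_1}^{f,s,1}(\gamma)=F_{\Omega_1}^{f,s,2}(\gamma)=0$. Consequently, for \emph{any} $J\in\mathbb{S}(\Omega_1,\gamma)$ the same defining identity \eqref{eq-fcg} yields
\begin{equation*}
f^s_{\Omega_1}\circ\gamma^J(1) \;=\; (1,J)\,F_{\Omega_1}^{f,s}(\gamma) \;=\; 0 + J\cdot 0 \;=\; 0,
\end{equation*}
so $\gamma^J(1)\in\mathcal{Z}\bigl(f^s_{\Omega_1}\bigr)$. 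Taking the union over $J\in\mathbb{S}(\Omega_1,\gamma)$ gives the claimed inclusion $\gamma^{\mathbb{S}(\Omega_1,\gamma)}(1)\subset\mathcal{Z}\bigl(f^s_{\Omega_1}\bigr)$.

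There is no real obstacle in this argument; the only subtlety worth double-checking is the domain bookkeeping, namely that for $\gamma\in\mathscr{P}(\mathbb{C}^n,\Omega_1)$ and $J\in\mathbb{S}(\Omega_1,\gamma)$ the value $f^s_{\Omega_1}(\gamma^J(1))$ is defined. This holds because by the definition of the $*$-product in \eqref{eq-starproduct def}, $f^s_{\Omega_1}=f^c_{\Omega_1}*f$ lives on $\Omega_1$, and $\gamma^J\subset\Omega_1$ by the choice of $J\in\mathbb{S}(\Omega_1,\gamma)$. All the actual work has already been done in Theorem \ref{thm:sym-309}; the present lemma is the clean structural consequence of that theorem.
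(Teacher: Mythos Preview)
Your proof is correct and follows essentially the same approach as the paper's own proof: invoke Theorem \ref{thm:sym-309} to obtain the real-valued path-slice stem function $F_{\Omega_1}^{f,s}$, use the vanishing $0=(1,I)F_{\Omega_1}^{f,s}(\gamma)$ together with the $\mathbb{R}$-linear independence of $1$ and $I$ to force both components of $F_{\Omega_1}^{f,s}(\gamma)$ to vanish, and then apply \eqref{eq-fcg} at an arbitrary $J\in\mathbb{S}(\Omega_1,\gamma)$. Your extra domain bookkeeping is a nice touch and is implicit in the paper.
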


\begin{proof}
	Drawing upon Theorem \ref{thm:sym-309}, we can deduce:
	\begin{eqnarray*}
		0 = f^s_{\scriptscriptstyle{\Omega_1}}(\gamma^I(1)) &=& (1, I)F_{\Omega_1}^{f,s}(\gamma) 
		\\
		&=& (1, I) \begin{pmatrix} F_{\Omega_1}^{f,s,1}(\gamma) \\  \\ F_{\Omega_1}^{f,s,2}(\gamma) \end{pmatrix} 
		\\
		&=& F_{\Omega_1}^{f,s,1}(\gamma) + I F_{\Omega_1}^{f,s,2}(\gamma),
	\end{eqnarray*}
	where $F_{\Omega_1}^{f,s,1}(\gamma)$ and $F_{\Omega_1}^{f,s,2}(\gamma)$ are real-valued. This implies that 
	\begin{equation*}
		F_{\Omega_1}^{f,s,1}(\gamma) = F_{\Omega_1}^{f,s,2}(\gamma) = 0,
	\end{equation*}
	leading to $F_{\Omega_1}^{f,s}(\gamma)$ being zero. As a result, for every $J \in \mathbb{S}(\Omega, \gamma)$:
	\begin{equation*}
		f^s_{\scriptscriptstyle{\Omega_1}}(\gamma^J(1)) = (1, I)F_{\Omega_1}^{f,s}(\gamma) = 0,
	\end{equation*}
	thereby confirming that the endpoints $\gamma^{\mathbb{S}(\Omega, \gamma)}(1)$ are all within the zero set $\mathcal{Z}\left(f^s_{\scriptscriptstyle{\Omega_1}}\right)$.
\end{proof}
 
Now we introduce a new concept of path-slice analytic  set  within  a slice-domain $\Omega$ in $\mathbb{H}_s^n$.

 \begin{defn}\label{def-path slice discrete}
 	A subset $A$ of a slice-domain $\Omega \subset \mathbb{H}_s^n$ is termed path-slice analytic if it either encompasses the entire domain $\Omega$ or, for each path $\gamma \in \mathscr{P}(\mathbb{C}^n, \Omega)$, certain conditions are met. Specifically, there exists a radius $r > 0$, a set of radii $r{[I]} \in (0, r]$ for each $I \in \mathbb{S}(\Omega, \gamma)$, and an analytic subset $E$ of the ball $B_{\mathbb{C}^n}(\gamma(1), r)$, distinct from the entire ball, such that 
 	\begin{equation*}
 		A \cap B_I\left(\gamma^I(1), r_{[I]}\right) \subseteq E^I
 	\end{equation*}
 for any $  I \in \mathbb{S}(\Omega, \gamma).$	Here, $E^I$ denotes the slice of the set $E$ corresponding to $I \in \mathbb{S}(\Omega, \gamma)$.
 \end{defn}

 We aim to establish that the zeros of a path-slice regular function   constitute a path-slice analytic set. To achieve this, we will need several lemmas.

\begin{lem}\label{prop-zero path-slice discrete}
	Let $\Omega_1\subset\mathbb{H}_s^n$ be a real-path-connected slice-domain, $\Omega_2\in\tau_s(\mathbb{H}_s^n)$ be $\Omega_1$-stem-preserving, and $f\in\mathcal{SR}(\Omega_2)$. Then $\mathcal{Z}\left(f^s_{\scriptscriptstyle{\Omega_1}}\right)$ is path-slice analytic.
\end{lem}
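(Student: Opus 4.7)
The plan is to reduce the problem to classical several-complex-variables analyticity by exploiting the real-valuedness of $F_{\Omega_1}^{f,s}$ asserted in \eqref{eq-F s real} together with its holomorphy on path space. First I would verify that $F:=F_{\Omega_1}^{f,s}$ is holomorphic on $\mathscr{P}(\mathbb{C}^n,\Omega_1)$: Proposition \ref{pr-FOmega1f holo} gives the holomorphy of $F_{\Omega_1}^{f}$, Lemma \ref{pr-F c holomorphic} transfers it to $F_{\Omega_1}^{f,c}$, and a short product-rule computation in \eqref{eq-star product pointwise} (using $\mathbb{I}^{2}=\mathbb{I}$, $\sigma^{2}=-\mathbb{I}$, $\mathbb{I}\sigma=\sigma\mathbb{I}$, and the non-commutative Leibniz rule) shows that the $*$-product of two holomorphic stem functions is again holomorphic, hence so is $F$.

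Next, fix $\gamma\in\mathscr{P}(\mathbb{C}^{n},\Omega_{1})$ and choose $r>0$ so that $B_{\mathscr{P}(\mathbb{C}^{n})}(\gamma,r)\subset\mathscr{P}(\mathbb{C}^{n},\Omega_{1})$ and \eqref{eq-holo def} holds for $F$ on this ball. Let $H:=F\circ\mathscr{L}_{\gamma}:B_{\mathbb{C}^{n}}(\gamma(1),r)\rightarrow\mathbb{R}^{2\times 1}$. Because $H^{1},H^{2}$ are real-valued, the PDE $\frac{1}{2}(\partial_{x_{\ell}}+\sigma\partial_{y_{\ell}})H=0$ unpacks to the classical Cauchy--Riemann system for $h:=H^{1}+iH^{2}$, so $h:B_{\mathbb{C}^{n}}(\gamma(1),r)\rightarrow\mathbb{C}$ is a bona fide holomorphic function. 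Using \eqref{eq-fcg}, for each $I\in\mathbb{S}(\Omega_{1},\gamma)$ and $z\in B_{\mathbb{C}^{n}}(\gamma(1),r)$ with $z^{I}\in\Omega_{1}$ one has $f^{s}_{\scriptscriptstyle{\Omega_{1}}}(z^{I})=H^{1}(z)+IH^{2}(z)$, hence $|f^{s}_{\scriptscriptstyle{\Omega_{1}}}(z^{I})|=|h(z)|$. Choosing $r_{[I]}\in(0,r]$ with $B_{I}(\gamma^{I}(1),r_{[I]})\subset\Omega_{1,I}$ (possible since $\Omega_{1}\in\tau_{s}(\mathbb{H}_{s}^{n})$, cf.\ Lemma \ref{pr-r gamma Omega S >0}) yields $\mathcal{Z}(f^{s}_{\scriptscriptstyle{\Omega_{1}}})\cap B_{I}(\gamma^{I}(1),r_{[I]})\subset E^{I}$ with $E:=h^{-1}(0)\subset B_{\mathbb{C}^{n}}(\gamma(1),r)$.

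To finish, I split into cases. If $\mathcal{Z}(f^{s}_{\scriptscriptstyle{\Omega_{1}}})=\Omega_{1}$, the first clause of Definition \ref{def-path slice discrete} applies directly. Otherwise I must show $E\neq B_{\mathbb{C}^{n}}(\gamma(1),r)$, i.e.\ $h\not\equiv 0$, so that $E$ is a \emph{proper} analytic subset; this is the step I expect to be the main obstacle, an identity-principle argument. Supposing $h\equiv 0$ on $B_{\mathbb{C}^{n}}(\gamma(1),r)$, the formula $f^{s}_{\scriptscriptstyle{\Omega_{1}}}(z^{I})=H^{1}(z)+IH^{2}(z)$ forces $f^{s}_{\scriptscriptstyle{\Omega_{1}}}\equiv 0$ on $B_{I}(\gamma^{I}(1),r_{[I]})\cap\Omega_{1,I}$ for every $I\in\mathbb{S}(\Omega_{1},\gamma)$. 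The same local formula, applied at every $\gamma$, exhibits each slice restriction $(f^{s}_{\scriptscriptstyle{\Omega_{1}}})_{I}$ as a classical holomorphic $\mathbb{C}_{I}$-valued function (since $f^{s}_{\scriptscriptstyle{\Omega_{1}}}$ is slice-preserving by Theorem \ref{thm:sym-309}), so $f^{s}_{\scriptscriptstyle{\Omega_{1}}}\in\mathcal{SR}(\Omega_{1})$; the usual identity theorem on $\Omega_{1,I}$ then kills $(f^{s}_{\scriptscriptstyle{\Omega_{1}}})_{I}$ on the connected component of $\Omega_{1,I}$ containing $\gamma^{I}(1)$, and propagating across slices via the representation formula \eqref{eq-representation formula} together with slice-connectedness of the slice-domain $\Omega_{1}$ yields $f^{s}_{\scriptscriptstyle{\Omega_{1}}}\equiv 0$ on all of $\Omega_{1}$, contradicting the case hypothesis.
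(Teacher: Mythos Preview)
Your argument is correct and follows a route genuinely different from the paper's. The paper first records that $f^s_{\scriptscriptstyle{\Omega_1}}$ is slice regular (via Theorems~\ref{thm-f c slice regular} and~\ref{thm-slice regular algebra}), fixes one reference unit $I\in\mathbb{S}(\Omega_1,\gamma)$, and uses the Splitting Lemma to regard $(f^s_{\scriptscriptstyle{\Omega_1}})_I$ as $\mathbb{C}_I$-holomorphic on $B_I(\gamma^I(1),r)$; the candidate analytic set $E$ is then the pull-back to $\mathbb{C}^n$ of the zero set of that single slice restriction, and Lemma~\ref{pr-zero circle} is invoked to transport every zero $z^J$ on another slice to a zero $z^I$ on the reference slice, yielding $E_{[J]}\subset E^J$. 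You instead stay on path space: the real-valuedness of $F_{\Omega_1}^{f,s}$ from \eqref{eq-F s real} lets you assemble a single $\mathbb{C}$-valued holomorphic function $h=H^{1}+iH^{2}$ directly on $B_{\mathbb{C}^n}(\gamma(1),r)$, and the identity $|f^s_{\scriptscriptstyle{\Omega_1}}(z^I)|=|h(z)|$ is automatically slice-independent, so $E=h^{-1}(0)$ captures zeros on \emph{every} slice at once and Lemma~\ref{pr-zero circle} is never needed. The price you pay is the Leibniz verification that the $*$-product of holomorphic stem functions is holomorphic and a by-hand identity-principle propagation at the end; the paper dispatches both by citing Theorem~\ref{thm-slice regular algebra} and \cite[Identity Principle~3.5]{Dou2023002}.
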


\begin{proof}
	According to Theorem \ref{thm-f c slice regular}, $f^c_{\scriptscriptstyle{\Omega_1}}$ is slice regular. It follows from Theorem \ref{thm-slice regular algebra} that $f^s_{\scriptscriptstyle{\Omega_1}}=f^c_{\scriptscriptstyle{\Omega_1}}*f$ is also slice regular. Let $\gamma\in\mathscr{P}(\mathbb{C}^n,\Omega_1)$, $I\in\mathbb{S}(\Omega_1,\gamma)$, $r\in \left(0,r_{\gamma,\Omega}^{\{I\}}\right)$,
	\begin{equation*}
		r_{[J]}=\min\left\{r,r_{\gamma,\Omega}^{\{J\}}\right\},\qquad\forall\ J\in\mathbb{S}(\Omega_1,\gamma),
	\end{equation*}
	and
	\begin{equation*}
		E_{[J]}:=\mathcal{Z}(f)\cap B_J(\gamma^J(1),r_{[J]}),\qquad\forall\ J\in\mathbb{S}(\Omega_1,\gamma).
	\end{equation*}

	If there is no analytic set $E$ in $B_{\mathbb{C}^n}(\gamma(1),r)$ such that $E\neq B_{\mathbb{C}^n}(\gamma(1),r)$ and $E_{[I]}\subset E^I$. Then by \cite[Splitting Lemma 3.3]{Dou2023002} and Identity Principle in complex analysis, $B_{\mathbb{C}^n}(\gamma(1),r)\subset\mathcal{Z}\left(f^s_{\scriptscriptstyle{\Omega_1}}\right)$. According to \cite[Indentity Principle 3.5]{Dou2023002}, $f^s_{\scriptscriptstyle{\Omega_1}}\equiv 0$. It implies that $\mathcal{Z}\left(f^s_{\scriptscriptstyle{\Omega_1}}\right)=\Omega$ is path-slice analytic.
	
	Otherwise, let $E$ be an analytic set in $B_{\mathbb{C}^n}(\gamma(1),r)$ with $E\neq B_{\mathbb{C}^n}(\gamma(1),r)$ and $E_{[I]}\subset E^I$, and let $z^J\in E_{[J]}$. Then
	\begin{equation*}
		\left(\gamma\circ\mathcal{L}_{\gamma(1)}^z\right)^J(1)=z^J\in\mathcal{Z}\left(f^s_{\scriptscriptstyle{\Omega_1}}\right).
	\end{equation*}
	By Lemma \ref{pr-zero circle},
	\begin{equation*}
		z^I=\left(\gamma\circ\mathcal{L}_{\gamma(1)}^z\right)^I(1)\in\left(\gamma\circ\mathcal{L}_{\gamma(1)}^z\right)^{\mathbb{S}(\Omega,\gamma)}(1)\subset\mathcal{Z}\left(f^s_{\scriptscriptstyle{\Omega_1}}\right).
	\end{equation*}
	It implies that $z^I\in E_{[I]}$, $z\in \left(\Psi_i^I\right)^{-1}\left(E_{[I]}\right)\subset E$ and $z^J\in E^J$. Therefore, $E_{[J]}\subset E^J$ and
	\begin{equation*}
		\mathcal{Z}\left(f^s_{\scriptscriptstyle{\Omega_1}}\right)\cap B_I(\gamma^I(1),r)=E_{[J]}\subset E^J.
	\end{equation*}
	By Definition \ref{def-path slice discrete}, $\mathcal{Z}\left(f^s_{\scriptscriptstyle{\Omega_1}}\right)$ is path-slice analytic.
\end{proof}

\begin{prop}\label{prop-f c}
	Let $\Omega_1\subset\mathbb{H}_s^n$ be a real-path-connected slice-domain, $\Omega_2\in\tau_s(\mathbb{H}_s^n)$ be $\Omega_1$-stem-preserving, and $f\in\mathcal{SR}(\Omega_2)$. Then
		\begin{equation}\label{eq-f c f s on R}
		f^c_{\scriptscriptstyle{\Omega_1}}=\overline{f}\qquad\mbox{and}\qquad f^s_{\scriptscriptstyle{\Omega_1}}=|f|^2\qquad\mbox{on}\qquad \Omega_{\mathbb{R}}.
	\end{equation}
\end{prop}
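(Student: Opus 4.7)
The plan is to unwind the definitions of $f^c_{\scriptscriptstyle{\Omega_1}}$ and $f^s_{\scriptscriptstyle{\Omega_1}}$ at a real point $q \in (\Omega_1)_{\mathbb{R}}$ and use the already-established behavior of $\mathscr{F}_{\Omega_1}^{f}$ on the real locus.

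First, fix an arbitrary $q \in (\Omega_1)_{\mathbb{R}}$. By the definition of $\mathfrak{I}$, we have $\mathfrak{I}(q) = 0$, and by the real branch of \eqref{eq-mathscr f def} (or equivalently Lemma \eqref{eq-lfo1}),
\[ \mathscr{F}_{\Omega_1}^{f}(q) = \begin{pmatrix} f(q) \\ 0 \end{pmatrix}. \]
Applying componentwise quaternionic conjugation immediately gives
\[ \mathscr{F}_{\Omega_1}^{f,c}(q) = \operatorname{conj}_{\mathbb{H}} \mathscr{F}_{\Omega_1}^{f}(q) = \begin{pmatrix} \overline{f(q)} \\ 0 \end{pmatrix}. \]
Plugging into the defining formula \eqref{eq-f Omega1 c def} for the $\Omega_1$-slice conjugation,
\[ f^c_{\scriptscriptstyle{\Omega_1}}(q) = (1, \mathfrak{I}(q)) \mathscr{F}_{\Omega_1}^{f,c}(q) = (1, 0) \begin{pmatrix} \overline{f(q)} \\ 0 \end{pmatrix} = \overline{f(q)}, \]
establishing the first equality.

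For the second equality, I will use the formula \eqref{eq-starproduct def} for the $*$-product. Writing $g := f^c_{\scriptscriptstyle{\Omega_1}}$, one has
\[ f^s_{\scriptscriptstyle{\Omega_1}}(q) = (g * f)(q) = (g(q), \mathfrak{I}(q) g(q)) \mathscr{F}_{\Omega_1}^{f}(q). \]
Since $\mathfrak{I}(q) = 0$ and $\mathscr{F}_{\Omega_1}^{f}(q) = (f(q), 0)^T$, this collapses to $g(q) \cdot f(q) = \overline{f(q)} f(q) = |f(q)|^2$, using the first equality for $g(q)$.

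There is no substantive obstacle here: the argument is a direct two-line unwinding once we know that on the real locus the stem function has vanishing second component and $\mathfrak{I}$ vanishes. The only point requiring care is to distinguish the ``$q \in \mathbb{R}^n$'' case in the piecewise definition \eqref{eq-mathscr f def} of $\mathscr{F}_{\Omega_1}^{f}$ from the generic one, and to note that the $*$-product, which in general mixes the two components of the stem vector, degenerates to ordinary quaternionic multiplication on the real locus because the second stem component vanishes. The hypothesis $f \in \mathcal{SR}(\Omega_2)$ is not really used beyond ensuring $f$ is path-slice, so the same computation actually yields $f^c_{\scriptscriptstyle{\Omega_1}} = \overline{f}$ and $f^s_{\scriptscriptstyle{\Omega_1}} = |f|^2$ on $(\Omega_1)_{\mathbb{R}}$ for any path-slice $f$.
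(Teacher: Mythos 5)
Your proposal is correct and follows essentially the same route as the paper: both arguments evaluate $\mathscr{F}_{\Omega_1}^{f}$ on the real locus via \eqref{eq-lfo1}, conjugate componentwise to get $f^c_{\scriptscriptstyle{\Omega_1}}=\overline{f}$ there, and then observe that the $*$-product degenerates to ordinary multiplication because the second stem component vanishes. Your closing remark that only path-sliceness of $f$ is actually used is also accurate.
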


\begin{proof}
	According to \eqref{eq-lfo1} and \eqref{eq-f Omega1 c def},
	\begin{equation*}
		\begin{split}
			f^c_{\scriptscriptstyle{\Omega_1}}
			=&(1,\mathfrak{I}) \mathscr{F}_{\Omega_1}^{f,c}
			=(1,\mathfrak{I}){\conj}_{\mathbb{H}}\circ\mathscr{F}_{\Omega_1}^{f}
			\\=&(1,\mathfrak{I}){\conj}_{\mathbb{H}}\circ\begin{pmatrix}
				f\\0
			\end{pmatrix}
			=(1,\mathfrak{I})\begin{pmatrix}
				\overline{f}\\0
			\end{pmatrix}
			=\overline{f},
		\end{split}\qquad\mbox{on}\qquad\Omega_{\mathbb{R}}.
	\end{equation*}
	By \eqref{eq-starproduct def} and \eqref{eq-lfo1},
	\begin{equation*}
		f^s_{\scriptscriptstyle{\Omega_1}}
		=f^c_{\scriptscriptstyle{\Omega_1}}*f
		=(f^c_{\scriptscriptstyle{\Omega_1}},\mathfrak{I}f^c_{\scriptscriptstyle{\Omega_1}}) \mathscr{F}_{\Omega_1}^{f}
		=(\overline{f},\mathfrak{I}\overline{f})\begin{pmatrix} f\\0 \end{pmatrix}
		=\overline{f}f=|f|^2,\qquad\mbox{on}\qquad\Omega_{\mathbb{R}}.
	\end{equation*}
	
\end{proof}

Ultimately, we are able to demonstrate that the zeros of a path-slice regular function  form a path-slice analytic set.

\begin{thm}\label{thm-analytic}
	Let $\Omega\subset\mathbb{H}_s^n$ be a self-stem-preserving slice-domain, and $f\in\mathcal{SR}(\Omega)$. Then $\mathcal{Z}(f)$ is path-slice analytic.
\end{thm}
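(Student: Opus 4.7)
The plan is to reduce the theorem immediately to the two main facts already established in this section: the zero inclusion $\mathcal{Z}(f) \subseteq \mathcal{Z}(f^s_{\scriptscriptstyle{\Omega}})$ furnished by \eqref{eq-zero}, and the path-slice analyticity of $\mathcal{Z}(f^s_{\scriptscriptstyle{\Omega}})$ supplied by Lemma \ref{prop-zero path-slice discrete}. The only setup observation is that because $\Omega$ is self-stem-preserving, it is simultaneously real-path-connected and $\Omega$-stem-preserving, so I may legitimately take $\Omega_1 = \Omega_2 = \Omega$ in both of those earlier results.

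With that in hand I would argue by checking the two alternatives of Definition \ref{def-path slice discrete} for $\mathcal{Z}(f^s_{\scriptscriptstyle{\Omega}})$. If $\mathcal{Z}(f^s_{\scriptscriptstyle{\Omega}}) = \Omega$, then Proposition \ref{prop-f c} yields $|f|^2 = f^s_{\scriptscriptstyle{\Omega}}|_{\Omega_{\mathbb{R}}} \equiv 0$, so $f$ vanishes on $\Omega_{\mathbb{R}}$, and the identity principle for slice regular functions on a slice-domain (\cite[Identity Principle 3.5]{Dou2023002}) then propagates this vanishing to all of $\Omega$, giving $\mathcal{Z}(f) = \Omega$, which is trivially path-slice analytic. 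Otherwise, for each $\gamma \in \mathscr{P}(\mathbb{C}^n, \Omega)$ the definition provides $r > 0$, radii $r_{[I]} \in (0,r]$ for $I \in \mathbb{S}(\Omega, \gamma)$, and a proper analytic subset $E \subsetneq B_{\mathbb{C}^n}(\gamma(1), r)$ with
$$\mathcal{Z}(f^s_{\scriptscriptstyle{\Omega}}) \cap B_I\bigl(\gamma^I(1), r_{[I]}\bigr) \subseteq E^I.$$
Intersecting the inclusion $\mathcal{Z}(f) \subseteq \mathcal{Z}(f^s_{\scriptscriptstyle{\Omega}})$ with the ball $B_I(\gamma^I(1), r_{[I]})$ transfers the same containment to $\mathcal{Z}(f)$, which is exactly the condition of Definition \ref{def-path slice discrete}.

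The main (and essentially only) subtlety is the borderline case in which $\mathcal{Z}(f^s_{\scriptscriptstyle{\Omega}})$ already occupies all of $\Omega$: the containment alone does not force $\mathcal{Z}(f)$ to satisfy the path-analytic condition, and one must separately conclude that $\mathcal{Z}(f) = \Omega$ too. The combination of Proposition \ref{prop-f c} (which identifies $f^s_{\scriptscriptstyle{\Omega}}$ with $|f|^2$ on the real slice) with the identity principle handles this cleanly. Apart from this, every remaining step is a routine unpacking of the definitions and the preparatory lemmas proved in Sections 3 and 4.
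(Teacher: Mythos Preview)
Your proposal is correct and follows essentially the same route as the paper: split on whether $\mathcal{Z}(f^s_{\scriptscriptstyle{\Omega}})=\Omega$, handle the degenerate case via Proposition~\ref{prop-f c} and the Identity Principle, and in the generic case combine the inclusion \eqref{eq-zero} with Lemma~\ref{prop-zero path-slice discrete}. The only difference is cosmetic---you spell out how the containment $\mathcal{Z}(f)\subset\mathcal{Z}(f^s_{\scriptscriptstyle{\Omega}})$ transfers the data of Definition~\ref{def-path slice discrete}, whereas the paper simply asserts that a subset of a path-slice analytic set not equal to $\Omega$ is again path-slice analytic.
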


\begin{proof}
	If $\mathcal{Z}\left(f^s_{\scriptscriptstyle{\Omega}}\right)=\Omega$, then by \eqref{eq-f c f s on R},
	\begin{equation*}
		0\equiv f^s_{\scriptscriptstyle{\Omega}}=|f|^2,\qquad\mbox{on}\qquad\Omega_{\mathbb{R}}.
	\end{equation*}
	It implies that $f\equiv 0$ on $\Omega_{\mathbb{R}}$. According to \cite[Indentity Principle 3.5]{Dou2023002},  $f\equiv 0$. It implies that $\mathcal{Z}\left(f\right)=\Omega$ is path-slice analytic.
	
	Otherwise, $\mathcal{Z}\left(f^s_{\scriptscriptstyle{\Omega}}\right)\neq\Omega$. According to \eqref{eq-zero}, $\mathcal{Z}(f)\subset\mathcal{Z}\left(f^s_{\scriptscriptstyle{\Omega}}\right)$. It follows from Proposition \ref{prop-zero path-slice discrete} that $\mathcal{Z}\left(f^s_{\scriptscriptstyle{\Omega}}\right)$ is path-slice analytic with $\mathcal{Z}\left(f^s_{\scriptscriptstyle{\Omega}}\right)\neq\Omega$, so is $\mathcal{Z}(f)\subset\mathcal{Z}\left(f^s_{\scriptscriptstyle{\Omega}}\right)$.
\end{proof}

\bibliographystyle{plain}
\bibliography{mybibfile}

\end{document}